\newtheorem{theorem}{Theorem}[section]
\newtheorem{lemma}[theorem]{Lemma}
\newtheorem{proposition}[theorem]{Proposition}
\newtheorem{sublemma}[theorem]{Sublemma}
\theoremstyle{definition}
\theoremstyle{remark}
\newtheorem{remark}[theorem]{Remark}
\numberwithin{equation}{section}
\newcommand{\eps}{\varepsilon}
\def\ve{\epsilon}
\begin{document}
\title{Metastability of Certain Intermittent Maps}
\author{Wael Bahsoun}
    \address{Department of Mathematical Sciences, Loughborough University, 
Loughborough, Leicestershire, LE11 3TU, UK}
\email{W.Bahsoun@lboro.ac.uk}
\author{Sandro Vaienti}
\address{
UMR-6207 Centre de Physique Th\'{e}orique, CNRS, Universit\'{e}
d'Aix-Marseille I, II, Universit\'{e} du Sud, Toulon-Var and FRUMAM,
F\'{e}d\'{e}ration de Recherche des Unit\'{e}s des Math\'{e}matiques de Marseille,
CPT Luminy, Case 907, F-13288 Marseille CEDEX 9}
\email{vaienti@cpt.univ-mrs.fr}
\thanks{We would like to thank H. Bruin for useful comments on this work. W.B. thanks the hospitality of Centre de Physique Th\'eorique, Luminy, where much of this work was carried. S. V. thanks the hospitality of the Department of Mathematical Sciences at Loughborough University where this work was initiated. S.V. was supported by the ANR-grant {\em Perturbations}}
\subjclass{Primary 37A05, 37E05}
\keywords{Metastability, Intermittency, First Return Map, Invariant Densities.}
\begin{abstract}
We study an intermittent map which has exactly two ergodic invariant densities. The densities are supported on two subintervals with a common boundary point. Due to certain perturbations, leakage of mass through subsets, called \textit{holes}, of the initially invariant subintervals occurs and forces the subsystems to merge into one system that has exactly one invariant density. We prove that the invariant density of the perturbed system converges in the $L^1$-norm to a \textit{particular} convex combination of the invariant densities of the intermittent map. In particular, we show that the ratio of the weights in the combination equals to the limit of the ratio of the measures of the \textit{holes}.
\end{abstract}
\maketitle
\pagestyle{myheadings} 
\markboth{Metastablilty of Certain Intermittent Maps }{W. Bahsoun And S. Vaienti}
\section{Introduction}
Open and metastable dynamical systems are currently very active topics of research in ergodic theory and dynamical systems. A dynamical system is called open if there is a subset in the phase space, called a \textit{hole}, such that whenever an orbit lands in it, the dynamics of this obit is terminated (see \cite{DY, Det} and references therein). A typical example of an open dynamical system is a billiard table with holes. Probabilistic and topological aspects of open dynamical systems have recently been of central interest to ergodic theorists  \cite{AB, BDM, BKT, DWY, FMS, FP, KL}.\\ 

A dynamical system is called metastable if it has two or more stable states. For example, a system which consists of two adjacent billiard tables that are linked via a small hole in their common boundary is a metastable dynamical system. Researchers have recognised that studying open dynamical systems can bring insights into the dynamics of metastable dynamical systems \cite{DW, GHW, KL}. In particular, it has been recognised that closed systems that are metastable behave approximately like a collection of open systems: the infrequent transitions between stable states in a metastable system are similar to infrequent escapes from associated open systems \cite{GHW, KL}.\\ 

 A particularly transparent description of this phenomenon is discussed in the recent work of Gonz\'alez-Tokman, Hunt and Wright \cite{GHW}.  In \cite{GHW}, a metastable expanding system is described by a piecewise smooth and expanding interval map which has two invariant sub-intervals and exactly two ergodic invariant densities. Due to small perturbations, the system starts to allow for infrequent leakage through subsets (also called \textit{holes}) of the initially invariant sub-intervals, forcing the two invariant sub-systems to merge into one perturbed system which has exactly one invariant density. The authors of \cite{GHW} proved that the unique invariant density of the perturbed interval map can be approximated by a convex combination of the two invariant densities of the original interval map, with the weights in the combination depending on the sizes of the holes.\\

In this paper, we depart to the \textit{non-uniformly} hyperbolic setting\footnote{With the exceptions of  \cite{BDM, FMS}, most of the results in ergodic theory of open and metastable systems have been obtained for uniformly hyperbolic systems. See also \cite{DY, FMS} for further details.}. In particular, we study an \textit{intermittent map} which has exactly two ergodic invariant densities. The densities are supported on two subintervals with a common boundary point. Due to certain perturbations, leakage of mass through \textit{holes} of the initially invariant subintervals occurs and forces the subsystems to merge into one system that has exactly one invariant density. We prove that the invariant density of the perturbed system converges in the $L^1$-norm to a \textit{particular} convex combination of the invariant densities of the intermittent map. In particular, we show that the ratio of the weights in the combination equals to the limit of the ratio of the measures of the \textit{holes}.\\

We would like to comment on the relationship between our work and the issue of statistical stability. The latter is usually established in the context of systems which admit a unique SRB measure (in our case an absolutely continuous invariant measure, {\em a.c.i.m.}) and  which are successively perturbed and the perturbed maps posses an SRB measure too. One way to formulate the statistical stability is by asking wether the perturbed density converges to the unperturbed one in $L^1$, w.r.t. the Lebesgue measure and whenever the SRB measure is absolutely continuous. A general result of this kind has been established by Alves and Viana   in the paper \cite{AV}, and successively by Alves \cite{A} where sufficient conditions are given to prove the statistical stability but still for the same class of maps. The latter is given by non-uniformly expanding maps which admit an induction structure with the first return map which is uniformly expanding, with bounded distortion and finally with {\em long branches} of the domains of local injectivity. The perturbed map is chosen in an open neighbourhood of the unperturbed one in the $C^k$ topology with $k\ge 2$, and a few more conditions are given to insure that the subsets with the same return times in the induction set are close and moreover the structural parameters of the maps (especially those bounding the derivative and the distortion) could be chosen uniformly in a $C^k$ neighbourhood of the unperturbed map. The main result is that when the perturbed maps converge to the unperturbed ones in the $C^k$ topology then the corresponding densities of the a.c.i.m. converge to each other in the $L^1$ norm, w.r.t. the Lebesgue measure.\\

 There are two main differences with our situation. First our unperturbed map admits more than one a.c.i.m.; second, the maps are only close in $C^0$, a better regularity being restored only locally on the open domain of injectivity of the branches. These two facts obliged us to find a completely different proof.\\

In section 2 we recall the result of \cite{GHW} about metastable expanding maps in a slightly more general setting. In section 3 we introduce our metastable intermittent system and its corresponding induced system. We then show that the induced system satisfies the assumptions of section 2. Moreover, we prove a lemma that relates invariant densities of the induced system to those of the original one. In section 4 we setup the problem of the metastable intermittent system. Further, we derive the formula of the \textit{particular} invariant density which is needed to approximate in the $L^1$-norm the invariant density of the perturbed system. This section also includes the statement of our main result (Theorem \ref{main}) and the strategy of our proof. Section 5 contains proofs of some technical lemmas and the proof of Theorem \ref{main}.\\

\noindent {\bf Notation.}  \\ $\Delta$ is an interval subset of $[0,1]$. We denote by $m$ the normalized Lebesgue measure on the unit interval and with $\left\|\cdot \right\|_1$ the associated $L^1$ norm. Given two sequences $a_n$ and $b_n$, when writing $a_n \lesssim b_n$, or equivalently $a_n=O(b_n)$ with $a_n$ and $b_n$ non-negative,  we mean that $\exists C\ge 1$, independent of $n$ and  such that $a_n\le C b_n$, $\forall n\ge 1$. By  $a_n \approx b_n$ we mean that $\exists C\ge 1$, independent of $n$ and such that $C^{-1}b_n\le a_n\le C b_n$, $\forall n\ge 1$. With $a_{n}\sim b_{n}$ we mean that $\lim_{n\rightarrow\infty}
\frac{a_{n}}{b_{n}}=1.$ We will also use the symbols ``$O$" 
in the usual Landau sense. Finally, $|Z|$ denotes the length of the interval $Z$.
 
\section{Invariant Densities of Metastable Expanding Maps}
\subsection{The expanding system}

Let $\hat T: \Delta\to \Delta$ be a map which satisfies the following conditions:\\

\noindent {\bf (A1)} There exists a countable partition of $\Delta$, which consists of a sequence of intervals $\{I_i\}_{i=1}^{\infty}$, $I_i\cap I_j=\emptyset$ for $ i\not= j$, $\bar I_i :=[q_{i,0},q_{i+1,0}]$ and there exists $\delta>0$  such that $\hat T_{i,0}:=\hat T|_{(q_{i,0},q_{i+1,0})}$ is $C^2$ which extends to a $C^2$ function $\bar T_{i,0}$ on a neighbourhood $[q_{i,0}-\delta,q_{i+1,0}+\delta]$ of $\bar I_i $ ;\\
\noindent {\bf (A2)} $\inf_{x\in \Delta\setminus\mathcal C_0}|\hat T'(x)|\ge\beta_0^{-1}>2$, where $\mathcal C_0=\{q_{i,0}\}_{i=1}^{\infty}$.\\
\noindent {\bf (A3)} The collection ${\hat T(I_i)}_{i=1}^{\infty}$ consists only of finitely many different intervals.\\
\noindent {\bf (A4)} $\exists$ $b$ in the interior of $\Delta$ such that $\hat T|_{\Delta_*}\subseteq \Delta_*$, where $*\in\{l,r\}$, $\Delta_*$ is an interval such that $\Delta_l\cup\Delta_r=\Delta$ and $\Delta_l\cap\Delta_r=\{b\}$.\\
\noindent {\bf (A5)} Let $H_0:=\hat T^{-1}\{b\}\setminus\{b\}$. We call $H_0$ the set of \textit{infinitesimal holes} and we assume that for every $n\ge 1$, $(\hat T^n\mathcal C_0)\cap H_0=\emptyset.$\\
\noindent {\bf (A6)} $\hat{T}$ verifies the Adler condition, namely there exists a constant $D_A>0$ such that  $\sup_i\sup_{x\in I_i}\frac{|D^2\hat{T}(x)|}{(D\hat{T}(x))^2}\le D_A$. In this case there will be an a.c.i.m. with a finite number of ergodic components \cite{Pi}. We will make the assumption that $\hat T$ admits exactly two ergodic a.c.i.ms $\hat \mu_*$, such that each $\hat \mu_*$ is supported on $\Delta_*$ and the corresponding density $\hat h_*$ is positive at each of the points of $H_0 \cap \Delta_*$. 
\subsection{Perturbations of the expanding system}  
Let $\hat T_{\eps}:\Delta\to \Delta$ be a perturbation of $\hat T$ which satisfies the following conditions:\\

\noindent {\bf (B1)} There exists a countable partition of $\Delta$, which consists of a sequence of intervals $\{I_{i,\eps}\}_{i=1}^{\infty}$, $I_{i,\eps}\cap I_{j,\eps}=\emptyset$ for $ i\not= j$, $\bar I_{i,\eps} :=[q_{i,\eps},q_{i+1,\eps}]$ such that\\ 
(i) for each $i$, $\eps\to q_{i,\eps}$ is a $C^2$ function for all $\eps\ge 0$ and for $\eps$ sufficiently small  we have that $[q_{i,\eps},q_{i+1,\eps}]\subset [q_{i,0}-\delta,q_{i+1,0}+\delta]$ ;\\ 
(ii) $\hat T_{\eps}|_{[q_{i,\eps},q_{i+1,\eps}]}$ has a $C^2$ extension $\bar T_{i,\eps}: [q_{i,0}-\delta,q_{i+1,0}+\delta]\to \mathbb R$, and $\bar T_{i,\eps}\to \bar T_{i,0}$ in the $C^2$ topology.\\
\noindent {\bf (B2)} The collection ${\hat T_{\eps}(I_{i,\eps})}_{i=1}^{\infty}$ consists only of finitely many different intervals.\\
\noindent{\bf (B3)} For each $\eps>0$, $\hat T_{\eps}$ admits a unique a.c.i.m. with  density $\hat h_{\eps}$.\\
\noindent {\bf (B4)}  Boundary condition:\\
(i) if $b\notin \mathcal C_0$, then $\hat T(b)=b$ and for all $\eps> 0$, $\hat T_{\eps}(b)=b$; \\
(ii) if $b\in \mathcal C_0$, then $\hat T(b-)<b<\hat T(b+)$ and for all $\varepsilon>0$, $b\in \mathcal C_{\varepsilon}$, where $\mathcal C_{\varepsilon}=\{q_{i,\eps}\}_{i=1}^{\infty}$.
\subsection{Holes in the expanding system $(\hat T_{\eps},\Delta)$}
We are interested in perturbations of $\hat T$ which produce ``leakage" of mass from $\Delta_l$ to $\Delta_r$ and vice versa. For this purpose we define the following sets:
$$\hat H_{l,\varepsilon}:=\Delta_l\cap \hat T^{-1}_{\varepsilon}(\Delta_r)$$
and 
$$\hat H_{r,\varepsilon}:=\Delta_r\cap \hat T^{-1}_{\varepsilon}(\Delta_l).$$
The sets $\hat H_{l,\varepsilon}$ and $\hat H_{r,\varepsilon}$ are called the ``left hole" and the ``right hole", respectively, of the perturbed expanding system $(\hat T_{\eps},\Delta)$ . Thus, when $\hat T_{\varepsilon}$ allows leakage of mass from $\Delta_l$ to $\Delta_r$, this leakage occurs when orbits of $\hat T_{\varepsilon}$ fall in the set $\hat H_{l,\varepsilon}$. Similarly, when $\hat T_{\varepsilon}$ allows leakage of mass from $\Delta_r$ to $\Delta_l$, this leakage occurs when orbits of $\hat T_{\varepsilon}$ fall in the set $\hat H_{r,\varepsilon}$.\\ 

Following \cite{GHW} the \textit{limiting hole ratio} ($l.h.r$) is defined by
$$l.h.r=\lim_{\eps\to 0}\frac{\hat \mu_r(\hat H_{r,\eps})}{\hat \mu_l(\hat H_{l,\eps})},$$
whenever the limit exists.\\ 

In the following we will denote by $BV([u,v])$ the space of functions of bounded variation defined on the closed interval $[u,v]$. We will equip this set with the complete norm given by the sum of the total variation plus the $L^1$ norm with respect to $m$. We denote this norm by $\left\|\cdot \right\|_{BV([u,v])}$ and the corresponding Banach space by $BV([u,v])$. By $P_{\eps}$ we denote the Perron-Frobenius operator \cite{Ba,BG} associated with the map $\hat T_{\eps}$ and acting on  $BV(\Delta)$. 

\begin{proposition}\label{prop1}
\text{ }\\
\begin{enumerate}
\item There exists a $\beta\in (0,1)$ and a $B\in (0,\infty)$, such that for any $\eps\ge 0$ and $f\in BV(\Delta)$, we have
$$\left\|P_{\eps}f\right\|_{BV(\Delta)}\le\beta\left\|f\right\|_{BV(\Delta)} +B||f||_1.$$
\item Suppose that the $l.h.r.$ exists. Then
$$\lim_{\eps\to 0}||\hat h_{\eps}-\hat h_p||_1=0,$$
where $\hat h_p=\hat\lambda_p\hat h_l+(1-\hat\lambda_p)\hat h_r$ and $\frac{\hat\lambda_p}{1-\hat\lambda_p}=l.h.r.$.
\end{enumerate}
\end{proposition}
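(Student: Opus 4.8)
The plan is to treat part (1) as a Lasota–Yorke inequality with constants uniform in $\eps$, and part (2) as a spectral-perturbation argument in the style of \cite{GHW, KL}. For part (1), the key point is that the constants $\beta$ and $B$ in the Lasota–Yorke estimate depend only on the lower bound $\beta_0^{-1}>2$ on the derivative, the Adler/distortion constant $D_A$, and the number of images appearing in (A3)/(B2); assumptions (B1)(ii) and (B2) guarantee that these quantities can be bounded uniformly for all small $\eps\ge 0$, since $\bar T_{i,\eps}\to\bar T_{i,0}$ in $C^2$. So I would write the standard pointwise bound for $P_\eps f$ on each branch, sum over branches, control the boundary terms produced by the countable partition using the Adler condition (this is exactly where (A6) is used, as in \cite{Pi}), and collect the contributions into the claimed form. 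The factor $\beta<1$ comes from $\beta_0<1/2$ after absorbing the finitely many image-interval endpoints into the $L^1$ term.

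For part (2), the strategy is first to use part (1) together with a weak-continuity statement $P_\eps\to P_0$ to conclude, via the Keller–Liverani perturbation theorem, that the spectral projections converge; this gives that $\hat h_\eps$ converges in $L^1$ along subsequences to some convex combination $\hat\lambda\hat h_l+(1-\hat\lambda)\hat h_r$ (any $L^1$-limit is $P_0$-invariant, hence lies in the two-dimensional fixed space spanned by $\hat h_l,\hat h_r$; non-negativity and normalization force it to be a convex combination). The real work is to identify $\hat\lambda$ as $\hat\lambda_p$. For this I would derive a balance equation for the limit: integrating the invariance relation $P_\eps\hat h_\eps=\hat h_\eps$ against the indicator of $\Delta_l$ and using $\hat T_\eps^{-1}(\Delta_l)=(\Delta_l\setminus\hat H_{l,\eps})\cup\hat H_{r,\eps}$ up to the infinitesimal-hole structure, one gets that the net flux across $b$ vanishes, i.e. (schematically) $\int_{\hat H_{l,\eps}}\hat h_\eps\,dm \sim \int_{\hat H_{r,\eps}}\hat h_\eps\,dm$. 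Since $\hat h_\eps\to \hat\lambda\hat h_l+(1-\hat\lambda)\hat h_r$ in $L^1$ and the holes shrink to points of $H_0$ where the densities are bounded and positive by (A6), one can replace $\hat h_\eps$ by its limit on each hole and obtain $\hat\lambda\,\hat\mu_l(\hat H_{l,\eps}) \sim (1-\hat\lambda)\,\hat\mu_r(\hat H_{r,\eps})$, whence $\hat\lambda/(1-\hat\lambda) = l.h.r.$; this pins down $\hat\lambda=\hat\lambda_p$ uniquely and removes the need for subsequences.

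The main obstacle I expect is the last replacement step: justifying that $\int_{\hat H_{*,\eps}}\hat h_\eps\,dm \sim \int_{\hat H_{*,\eps}}\hat h_*\,dm$ as $\eps\to 0$. $L^1$-convergence of $\hat h_\eps$ alone does not control integrals over the vanishingly small sets $\hat H_{*,\eps}$; one needs the uniform $BV$ bound from part (1) to get an $L^\infty$ bound on $\hat h_\eps$ (and hence equicontinuity-type control near the holes), together with the fact from (A5) that the infinitesimal holes avoid the forward orbit of the critical set, so that $\hat h_\eps$ and $\hat h_*$ are genuinely bounded and bounded away from $0$ on a fixed neighbourhood of each point of $H_0$. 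Carefully combining the $BV$ bound, the geometry $\hat H_{*,\eps}\to H_0\cap\Delta_*$, and the $L^1$ convergence to pass the limit inside is the technical heart of the argument; the rest is bookkeeping with the Perron–Frobenius identity and the two-dimensionality of the unperturbed fixed space.
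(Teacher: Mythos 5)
Your proposal is correct and follows essentially the same route as the paper, which for part (1) appeals to the standard uniform Lasota--Yorke estimate for $C^2$ perturbations satisfying Adler's condition, and for part (2) defers entirely to the argument of \cite{GHW}: uniform $BV$ bounds, subsequential $L^1$ limits lying in the two-dimensional fixed space of $P_0$, the exact flux-balance identity $\hat\mu_{\eps}(\hat H_{l,\eps})=\hat\mu_{\eps}(\hat H_{r,\eps})$, and identification of the weight via local control of $\hat h_{\eps}$ near the infinitesimal holes using (A5) and the positivity assumed in (A6). The only cosmetic difference is that you invoke the Keller--Liverani perturbation theorem where the cited proof extracts subsequential limits by a compactness argument, and you correctly isolate as the technical heart the replacement of $\hat h_{\eps}$ by its limit on the shrinking holes, which is exactly the ``locations and sizes of the jumps'' analysis the paper's footnote attributes to \cite{GHW}.
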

\begin{proof}
The proof of the first statement, which is the uniform Lasota-Yorke inequality, is standard for $C^2$ perturbations of $\hat T$ with $|\hat T'(x)|\ge\beta^{-1}_0>2$ and satisfying Adler's condition. The proof of the second statement is exactly the same as the proof provided by \cite{GHW} for Lasota-Yorke maps with finite number of branches\footnote{We impose the same conditions as the ones imposed by \cite{GHW}, except that we relax the assumption on the number of branches. Instead of requiring the map to have only finite number of branches, we allow maps with countable number of branches whose image set is finite. The proofs of \cite{GHW} only depend on exploiting the locations and sizes of the jumps of the sets of discontinuities of the invariant densities $h_\eps$ which occur on the forward trajectories of the partition points of $\hat T_{\eps}$. Thus their proof follows verbatim for the class of maps $\hat T_{\eps}$ of this paper.}. 
\end{proof}
\begin{remark}
It will be important in the following that $\beta$ and $B$ can be chosen independently of $\eps$ and $\eps$ small. This can be easily achieved by recalling that 
those quantities are in fact explicitly determined in terms of the map, we refer to \cite{AV} for the details. In particular they depend on: (i) the infimum of the absolute value of the derivative, which we denoted by $\beta_0$ for $T$ and which persist larger than $2$ by condition (B1); (ii)  the constant $D_A$ bounding the Adler's condition which by its definition (see above),  can also be chosen uniformly in $\eps$ for $\eps$ small enough.
\end{remark}
\section{A metastable intermittent map}
A main issue of our work will be to compare a map of the interval with a neutral fixed point (intermittent map), with a perturbation of it. Instead of studying a general class of maps, we prefer to work with a particular example which allows us to analyze in a precise manner the steps of our approach. By looking at the proofs in the following sections, it will be clear that our approach can be extended to other intermittent maps. 
\subsection{The intermittent map and its perturbation}\label{intermittent}
Let $\alpha\in (0,1)$. For each $\eps\ge 0$ define the continuous map $T_{\eps}:[0,1]\to[0,1]$  by:\\

\begin{equation}\label{map}
T_{\eps}(x)=\left\{\begin{array}{cccc}
T_{1,\eps}:=x+4^{\alpha}(1+4\eps)x^{1+\alpha}&\mbox{for $0\le x< \frac{1}{4}$}\\
T_{2,\eps}:=-4(1+2\eps)x+\frac32 +3\eps&\mbox{for $\frac{1}{4}\le x< \frac{3}{8}$}\\
T_{3,\eps}:=4x-\frac32&\mbox{for $\frac{3}{8}\le x< \frac{1}{2}$}\\
T_{4,\eps}(x)&\mbox{for $\frac{1}{2}\le x\le 1$}
\end{array}
\right. .
\end{equation}
\begin{figure}[htbp] 
   \centering
   \includegraphics[width=2.5in]{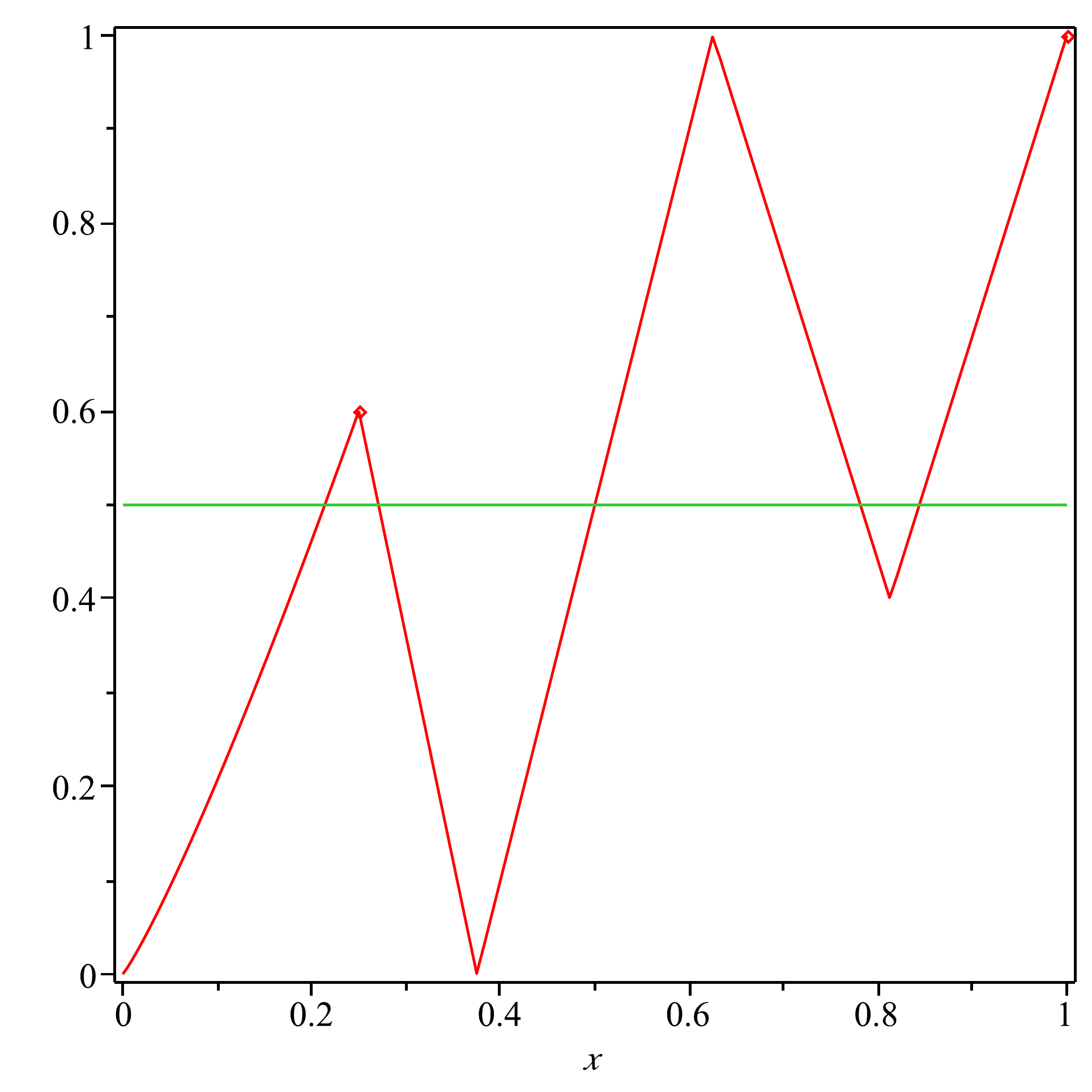} 
   \caption{The graph of $T_{\eps}$ for the values $\alpha=0.3$ and $\eps=0.1$}   \label{Fig1}
\end{figure}
The component $T_{4,\eps}(x)$ continuously extends $T_{3,\eps}$ on the right; it is piecewise expanding with the absolute value of the derivative bigger than\footnote{Since $T_{4,\eps}\equiv \hat {T_{\eps}}|_{[1/2,1]}$, one can replace the assumption $\inf_{x}|T'_{4,\eps}(x)|>2$ by the assuming that $\inf_{x}|T'_{4,\eps}(x)|>1$ and $T_{4,\eps}$ has no periodic critical points except at $1$. See \cite{GHW} for further details.} $2$, of class $C^2$ except for the points of relative minima and with a finite number of long branches. We will assume that it has only one spike emerging on the right side of $1/2$ (see Figure \ref{Fig1}) and this spike is located at the point of relative minimum $s_r$ which does not move with $\eps$. We finally suppose that the height of the spike is exactly $\eps$; likewise for the left side. Notice that for $\eps=0$, the intermittent map $T_0:=T$ has exactly two ergodic invariant probability\footnote{Note that the case $\alpha\ge 1$ in (\ref{map}) is not covered in this paper. It is well known that when $\hat \mu_l$ is $\sigma$-finite. Obtaining results similar to those of this paper for intermittent maps with $\alpha\ge 1$ is an interesting open problem.} densities, $h_l$ supported on $[0,1/2]$ and $h_r$ supported on $[1/2,1]$. Moreover, for any $\eps>0$, the perturbed map has a unique invariant probability density $h_{\eps}$. We will elaborate more on the uniqueness of $h_\eps$ in the Appendix. The graph of the map is shown in Figure \ref{Fig1}. Let us point out that with our assumptions $T$ and $T_{\eps}$ are $C^0$ close, namely $\lim_{\eps\rightarrow 0}\left\| T_{\eps}-T\right\|_0 =0$. Since $T$ and $T_{\eps}$ are also continuous (and hence uniformly continuos on the closed unit interval), this implies that for any $n>0$ we have as well $\lim_{\eps\rightarrow 0}\left\| T^n_{\eps}-T^n\right\|_0 =0$.\\

\subsection{Holes in the intermittent system $(T_{\eps}, [0,1])$}
We are interested in perturbations of $T$ which produce ``leakage" of mass from $I_l:=[0,b]$ to $I_r:[b,1]$ and vice versa. For this purpose we define the following sets:
$$H_{l,\varepsilon}:=I_l\cap T^{-1}_{\varepsilon}(I_r)$$
and 
$$ H_{r,\varepsilon}:=I_r\cap T^{-1}_{\varepsilon}(I_l).$$
The sets $H_{l,\varepsilon}$ and $H_{r,\varepsilon}$ are called the ``left hole" and the ``right hole", respectively, of the perturbed intermittent system $(T_{\eps},[0,1])$. Note that for the intermittent system defined in (\ref{map}) $b:=1/2$.
\subsection{The induced system}
For each $\eps\ge 0$, we induce $T_{\eps}$ on the same set $\Delta:=[a_0,1]$, where $a_0:=1/4$. We also set $b_0:=1/4$. It is important to notice that $a_0$ and consequently $\Delta$ are independent of $\eps$ (See Figure \ref{Fig2}). Then for $n\ge 1$ we define
$$b_{n+1,\eps} = T_{1,\eps}^{-1}(b_{n,\eps}),\, a_{n,\eps} = T^{-1}_{2,\eps}(b_{n,\eps}),\text{ and } a'_{n,\eps} = T^{-1}_{3,\eps}(b_{n,\eps}).$$
Then for $\eps\ge 0$ we define the induced map $\hat T_{\eps}:\Delta\to\Delta$ by
\begin{equation}\label{induced}
\hat T_{\eps}(x)=\left\{\begin{array}{cc}
T_{\eps}(x)&\mbox{for $x\in Z_{1,\eps}$}\\
T^{n+1}_{\eps}(x)&\mbox{for $x\in Z_{n,\eps}$}\\
\end{array}
\right. ,
\end{equation}
where $Z_{1,\eps}:=(a_0,a_{1,\eps})\cup(a'_{1,\eps},1)$ and $Z_{n,\eps}:=(a_{n-1,\eps},a_{n,\eps})\cup(a'_{n,\eps},a'_{n-1,\eps})$.\\

\noindent We now define the following sets:
$$W_{0,\eps}:=(a_0,1)\text{ and } W_{n,\eps}:=(b_{n,\eps},b_{n-1,\eps}),\, n\ge 1.$$
Observe that 
$$T_{\eps}(Z_{n,\eps})= W_{n-1,\eps}\text{ and } \tau_{Z_{n,\eps}}=n,$$
where $\tau_{Z_{n,\eps}}$ is the first return time of $Z_{n,\eps}$ to $\Delta$.
\begin{lemma}\label{Le0}
\text{ }
\begin{enumerate}
\item For $\eps=0$, the invariant densities of $\hat T$, $\hat h_l$ and $\hat h_r$, are Lipschitz continuous and bounded away from $0$ on $[a_0,b]$, $[b,1]$ respectively.
\item For $\eps=0$, the induced map $\hat T:\Delta\to\Delta$ satisfies assumptions (A1)-(A6).
\item For $\eps>0$, the perturbed induced map $\hat T_{\eps}$ satisfies conditions (B1)-(B4).
\item The limiting hole ratio of the induced system
$$l.h.r=\lim_{\eps\to 0}\frac{\hat \mu_r(\hat H_{r,\eps})}{\hat \mu_l(\hat H_{l,\eps})}$$
exists and it is different from zero and infinity.
\end{enumerate} 
\end{lemma}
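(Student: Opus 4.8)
For parts (1)--(3) I would first record the geometry. Since $T$ leaves $[0,1/2]$ and $[1/2,1]$ invariant when $\eps=0$, one takes $b=1/2$, $\Delta_l=[1/4,1/2]$, $\Delta_r=[1/2,1]$; then $\hat T|_{\Delta_l}$ is the first-return map of $T$ to $\Delta_l$, $\hat T|_{\Delta_r}=T_{4,0}$, and every first-return branch is a composition $T_1^{\,k}\circ T_j$ ($j\in\{2,3,4\}$) on a monotonicity interval whose interior avoids the neutral point $0$, so it is $C^2$ there and extends $C^2$-ly past its ends (extend the outer factor): this is (A1). Every branch either is $T_4$ (derivative $>2$) or begins with a factor of absolute derivative $4$ (from $T_2$ or $T_3$) followed by iterates of $T_1$ (derivative $\ge1$), so $\inf|\hat T'|>2$, which is (A2); the $T_2$- and $T_3$-branches are Markov onto $\Delta_l$ and $T_{4,0}$ has finitely many images, giving (A3); (A4) is the invariance just noted; (A5) is a direct inspection of the forward $\hat T$-orbits of the (partition) points of $\mathcal C_0$ against the explicit set $H_0=\hat T^{-1}\{1/2\}\setminus\{1/2\}$; and (A6) is the standard bounded-distortion estimate for the first-return map of an intermittent map to a region bounded away from the neutral point. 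With these in hand, $\hat T|_{\Delta_l}$ and $\hat T|_{\Delta_r}$ are topologically mixing, uniformly expanding, piecewise-$C^2$ Markov maps, so each carries a unique a.c.i.m.\ whose density solves $P_0\hat h_*=\hat h_*$; a Lasota--Yorke estimate in the Lipschitz norm (available from $C^2$ branches, $\inf|\hat T'|>2$, a finite image set and bounded distortion) shows $\hat h_*$ has a Lipschitz version, and positivity of $P_0$ together with mixing forces $\hat h_*>0$ on its support --- this is (1) and finishes (A6). For (3): the partition endpoints $a_{n,\eps},a'_{n,\eps},b_{n,\eps}$ are pull-backs of those of $\hat T$ under $T_{1,\eps},T_{2,\eps},T_{3,\eps}$, so they depend $C^\infty$ on $\eps$ by the implicit function theorem, giving (B1)--(B2); for $\eps>0$ the holes join $\Delta_l$ and $\Delta_r$, so $\hat T_\eps$ is topologically transitive and hence uniquely a.c.\ ergodic, giving (B3); and since $T,T_\eps$ are continuous at $b=1/2$ with $T(1/2)=T_\eps(1/2)=1/2$, case (i) of (B4) applies.

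The substantive part is (4). Because $\hat T|_{\Delta_*}$ is the first-return map of $(T,\mu_*)$ to $\Delta_*$ (where $\mu_*=h_*\,dm$ is the ergodic a.c.i.m.\ of $T$ on $I_*=\Delta_*$, a probability measure, so in particular $\mu_r(\Delta_r)=1$ and $\hat h_r=h_r$), one has $\hat h_*=h_*|_{\Delta_*}/\mu_*(\Delta_*)$, hence $\hat\mu_*(E)=\mu_*(E)/\mu_*(\Delta_*)$ for $E\subseteq\Delta_*$; thus it suffices to evaluate $\mu_*(\hat H_{*,\eps})$ as $\eps\to0$. I would first localise the leakage: a $T_\eps$-orbit starting in $\Delta_l$ stays in $[0,1/2)$ until it returns to $\Delta_l$ or escapes into $(1/2,1/2+\eps)\subset\Delta_r$, and escape can occur only through the two overshooting branches $T_{1,\eps}$ and $T_{2,\eps}$, which both send $1/4$ to $1/2+\eps$; consequently the ``base'' left hole is an interval $E_\eps=(c_\eps,d_\eps)\ni1/4$, coinciding with the one-step hole $H_{l,\eps}$, with $1/4-c_\eps\sim\eps/(2+\alpha)$ and $d_\eps-1/4\sim\eps/4$ by the implicit function theorem, so $m(E_\eps)\sim\bigl(\tfrac14+\tfrac1{2+\alpha}\bigr)\eps$. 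Then $\hat H_{l,\eps}$ is exactly the set of $x\in\Delta_l$ whose excursion from $\Delta$ visits $E_\eps$; using the Kac identity $\int_{\Delta_l}\sum_{i=0}^{\tau(x)-1}\mathbf 1_{E_\eps}(T^ix)\,d\mu_l(x)=\mu_l(E_\eps)$ (with $\tau$ the first return time), a second-moment bound for the rare event of a double visit to $E_\eps$ within one excursion, and the closeness of $T_\eps$ to $T$, one obtains $\mu_l(\hat H_{l,\eps})=\mu_l(E_\eps)(1+o(1))\sim h_l(1/4)\bigl(\tfrac14+\tfrac1{2+\alpha}\bigr)\eps$, using $h_l$ bounded and continuous at $1/4$ from part (1). (Equivalently, one may enumerate the first-return branches through $\Delta_l$ that leak, note the $j$-th contributes $\sim a_j\eps$ with $\sum_j a_j<\infty$ thanks to a polynomial tail $a_j\lesssim j^{-1-1/\alpha}$, and pass to the limit by dominated convergence.)

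On the right there is no excursion: every point of $\Delta_r$ returns in one step under $T_{4,\eps}$, so $\hat H_{r,\eps}=\{x\in[1/2,1]:T_{4,\eps}(x)<1/2\}=H_{r,\eps}$ is exactly the interval cut out by the spike at $s_r$, of length $\sim\kappa_r\eps$ with $\kappa_r=|T_{4,0}'(s_r^-)|^{-1}+|T_{4,0}'(s_r^+)|^{-1}\in(0,1)$, whence $\mu_r(\hat H_{r,\eps})=\int_{\hat H_{r,\eps}}h_r\sim h_r(s_r)\kappa_r\,\eps$ by continuity of $h_r$ at $s_r$. Dividing,
\[
l.h.r=\lim_{\eps\to0}\frac{\hat\mu_r(\hat H_{r,\eps})}{\hat\mu_l(\hat H_{l,\eps})}
=\frac{\mu_l(\Delta_l)\,h_r(s_r)\,\kappa_r}{\mu_r(\Delta_r)\,h_l(1/4)\,\bigl(\tfrac14+\tfrac1{2+\alpha}\bigr)},
\]
which is strictly positive and finite by part (1) (as $h_l(1/4),h_r(s_r)>0$), and --- since $\mu_r(\Delta_r)=1$ --- equals $\mu_l(\Delta_l)$ times the limiting hole ratio $\lim_{\eps\to0}\mu_r(H_{r,\eps})/\mu_l(H_{l,\eps})$ of the non-induced system.

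I expect the main obstacle to be the estimate $\mu_l(\hat H_{l,\eps})=\mu_l(E_\eps)(1+o(1))$: one must control the contribution of arbitrarily long excursions, i.e.\ bound the expected number of \emph{pairs} of visits to $E_\eps$ inside a single excursion (or, in the branch-by-branch form, the tail $\sum_j a_j$), which forces one to use the polynomial return-time tail $\lesssim n^{-1/\alpha}$ of the intermittent map together with an upgrade of the mere $C^0$-closeness of $T_\eps$ to $T$ to $C^2$-closeness on each branch, so that $h_l$ composed with the perturbed inverse branches converges at the right rate along the whole excursion. This is precisely where the neutral fixed point (the non-uniform hyperbolicity) makes the problem genuinely harder than the expanding case of \cite{GHW}; parts (1)--(3), by contrast, are bookkeeping on top of standard transfer-operator technology once the first-return structure is identified.
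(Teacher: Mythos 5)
Your parts (1)--(3) follow the paper's route (which is itself only a few lines: piecewise-onto $C^2$ expanding Markov structure for (1), $\inf|\hat T'|\ge 4$ on the induced branches and $\hat T^n(\mathcal C_0)\subset\{b,1\}$ disjoint from $H_0$ for (2), and a direct check of (B1)--(B4) for (3)); your extra detail on the Lipschitz Lasota--Yorke estimate and the $C^2$ dependence of $q_{i,\eps}$ on $\eps$ is consistent with what the paper asserts. Part (4) is where you genuinely diverge. The paper works entirely inside the induced system: it writes $\hat H_{l,\eps}$ as a union of intervals $Q_{k,\eps}$, one per inducing branch, applies the mean value theorem to get $|Q_{k,\eps}|=\eps/|D\hat T_\eps(\chi^k_{l,\eps})|$, and proves convergence of the series $\sum_k\hat h_l(\xi^k_{l,\eps})|D\hat T_\eps(\chi^k_{l,\eps})|^{-1}$ to $\sum_k\hat h_l(a_{k-1})|D\hat T(a_{k-1})|^{-1}$ using bounded distortion to dominate the tail; the limit is then exhibited as a finite, positive, but not further evaluated series. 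You instead push the computation down to the original system via the Kac/Pianigiani identity, localize the leakage at the one-step hole $E_\eps=H_{l,\eps}\ni 1/4$, and obtain a closed-form limit $h_l(1/4)\bigl(\tfrac14+\tfrac1{2+\alpha}\bigr)\eps/\mu_l(\Delta_l)$; this is correct (and in fact stronger: one can check it agrees with the paper's series through the fixed-point equation for the transfer operator of $\hat T|_{\Delta_l}$, once one notes that your accounting automatically includes the leaking pieces sitting in the $T_3$-side inducing branches, whereas the paper's $Q_{k,\eps}=[a_{k-1,\eps},w_{k,\eps}]$ as written only lists the $T_2$-side ones). Your observation that $\hat\mu_r(\hat H_{r,\eps})/\hat\mu_l(\hat H_{l,\eps})$ equals $\mu_l(\Delta_l)$ times the hole ratio of the non-induced system also anticipates identity (5.3)/(5.4) used later in the proof of Theorem 4.3(2).

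Two caveats. First, the ``second-moment bound for a double visit'' is unnecessary: a visit to $E_\eps$ terminates the excursion at the next step (the image lands in $\Delta$), so each excursion visits $E_\eps$ at most once and the Kac identity gives $\mu_l(\{x:\text{the }T\text{-excursion visits }E_\eps\})=\mu_l(E_\eps)$ exactly; you should also write the one-sided limits $h_l(1/4^-)$ and $h_l(1/4^+)$ separately, since $1/4$ is a partition point of $T$ and the density need not be continuous there. Second, and more importantly, the step you correctly flag as the main obstacle --- replacing the $T_\eps$-excursion sets by the $T$-excursion sets uniformly over all (infinitely many) branches --- is exactly the technical core of the paper's proof, carried out there by bounded distortion of the first-return map plus dominated convergence; your proposal identifies the right tool (the polynomial tail $|Z_k|\lesssim k^{-1-1/\alpha}$ and branchwise $C^2$ closeness) but does not execute it, so as it stands the argument is a correct strategy with the same hard estimate deferred rather than proved.
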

\begin{proof}
Statement (1) follows from the fact that $\hat T_{|[a_0,b]}$ is piecewise $C^2$, piecewise onto and expanding (see \cite{BG} for example). The same properties hold for $\hat T_{|[b,1]}$.  
To prove (2), observe that $\sup_{x\in\Delta}|\hat T'(x)|>3$. Moreover, for all $n\ge 1$, $\hat T^{n}(\mathcal C_0)=\{b,1\}\cap H_0=\emptyset$. Statement (3) is satisfied, in particular, condition (B4).
We now prove (4). We first observe that 
$$
\frac{\hat\mu_r(\hat H_{r,\eps})}{\hat\mu_l(\hat H_{l,\eps})}=\frac{\int_{\hat H_{r,\eps}}\hat h_rdx}{\int_{\hat H_{l,\eps}}\hat h_ldx}=\frac{\hat h_r(\xi_{r,\eps})|\hat H_{r,\eps}|}{\sum_{k=1}^{\infty}\hat h_l(\xi_{l,\eps}^k)|Q_{k,\eps}|},
$$
where we applied the mean value theorem: $\xi_{r,\eps}$ is a point in $\hat H_{r,\eps}$, $Q_{k,\eps}=[a_{k-1,\eps},w_{k,\eps}]$, where $w_{k,\eps}=\hat{T}_{\eps}^{-1}(b)\cap Z_{k,\eps}$ and $\xi_{l,\eps}^k$ is a point in $Q_{k,\eps}$. Again by the mean value theorem there will be a point $\chi_{l,\eps}^k\in Q_{k,\eps}$ and such that $|Q_{k,\eps}|=\frac{\eps}{|D\hat{T}_{\eps}(\chi_{l,\eps}^k)|}$. Moreover, by the assumptions on the branch $\hat{T}_{4,\eps}$ we get immediately that $|\hat H_{r,\eps}|=\eps \left[|D\hat{T}_{4,\eps}(u_{l,\eps})|^{-1}+|D\hat{T}_{4,\eps}(u_{r,\eps})|^{-1}\right]$, where $u_{l,\eps}$ (resp. $u_{r,\eps}$) is a point on the left hand side (resp. right hand side) of $s_r$. Recall that $s_r$ is the relative minimum of $T_{4,\eps}$ and that $T_{4,\eps}\equiv \hat T_{4,\eps}$. Thus we have
\begin{equation}\label{neweq}
\frac{\hat\mu_r(\hat H_{r,\eps})}{\hat\mu_l(\hat H_{l,\eps})}=\frac{\hat h_r(\xi_{r,\eps})| \left[|D\hat{T}_{4,\eps}(u_{l,\eps})|^{-1}+|D\hat{T}_{4,\eps}(u_{r,\eps})|^{-1}\right]}{\sum_{k=1}^{\infty}\hat h_l(\xi_{l,\eps}^k)|D\hat{T}_{\eps}(\chi_{l,\eps}^k)|^{-1}}.
\end{equation}
We first deal with the denominator on the right hand side of (\ref{neweq}). We write 
$$ D\hat{T}_{\eps}(\chi_{l,\eps}^k)-D\hat{T}(a_{k-1})=D\hat{T}_{\eps}(\chi_{l,\eps}^k)-D\hat{T}(\chi_{l,\eps}^k)+D\hat{T}(\chi_{l,\eps}^k)-D\hat{T}(a_{k-1}).$$ 
Note that, by assumption (B1), 
$$\lim_{\eps\to 0}|D\hat{T}_{\eps}(\chi_{l,\eps}^k)-D\hat{T}(\chi_{l,\eps}^k)|=0,$$
and, by the continuity of $D\hat{T}$ on  $[a_{k-1}-\delta, a_k+\delta]$,
$$\lim_{\eps\to 0}|D\hat{T}(\chi_{l,\eps}^k)-D\hat{T}(a_{k-1})|=0.$$
Therefore, 
$$\lim_{\eps\to 0}\sum_{k=1}^{\infty}\hat h_l(\xi_{l,\eps}^k)|D\hat{T}_{\eps}(\chi_{l,\eps}^k)|^{-1}=\sum_{k=1}^{\infty}\hat h_l(a_{k-1})|D\hat{T}(a_{k-1})|^{-1}.$$ 
We now show that $\sum_{k=1}^{\infty}\hat h_l(a_{k-1})|D\hat{T}(a_{k-1})|^{-1}$ is finite and different from $0$. First of all the density $\hat h_l$ is bounded away from zero and infinity in the preimages of $b$ since it is Lipschitz continuous and bounded from below on $[b_0,b]$. Then we observe that the assumptions (A1, A2, A3, A6) imply that the first return map has bounded distortion. Therefore, there exists a constant $C_d$ independent of $k$ which allows us to bound $|D\hat{T}(a_{k-1})|^{-1}\le C_d |D\hat{T}(v_{k})|^{-1}$ where $v_k$ is a point in $Z_{k,\eps}$ for which the inverse of the derivative  gives the length $|Z_{k,\eps}|$ of $Z_{k,\eps}$ times the inverse of the length  of $[b_0,b]$; finally  the sum over the lengths of the $Z_{k,\eps}$ on $[b_0,b]$ gives of course $b-b_0$. 
We now bound the numerator in (\ref{neweq}). By an argument similar to that used above we have
\begin{equation*}
\begin{split}
&\lim_{\eps\to 0}\hat h_r(\xi_{r,\eps})\left[|D\hat{T}_{4,\eps}(u_{l,\eps})|^{-1}+|D\hat{T}_{4,\eps}(u_{r,\eps})|^{-1}\right]\\
&\hskip 6cm =\hat h_r(s_r)\left[|D_l\hat{T}_{4}(s_r)|^{-1}+|D_r\hat{T}_{4}(s_r)|^{-1}\right],
\end{split}
\end{equation*}
where $D_l\hat{T}_4(s_r)$ (resp. $D_r\hat{T}_4(s_r)$) denotes the right (resp. left) derivative of $\hat{T}_4$ at the point $s_r$.
\end{proof}
\begin{remark}\label{Re1}
Lemma \ref{Le0} implies that results of Proposition \ref{prop1} hold for the induced system. In particular,
 $$\lim_{\eps\to 0}||\hat h_{\eps}-\hat h_p||_1=0,$$
where 
\begin{equation}\label{particularhat}
\hat h_p:=\hat\lambda_p\hat h_l+(1-\hat\lambda_p)\hat h_r.
\end{equation}
and 
$$l.h.r.=\frac{\hat\lambda_p}{1-\hat\lambda_p}.$$
\end{remark}
\begin{figure}[htbp] 
   \centering
   \includegraphics[width=3in]{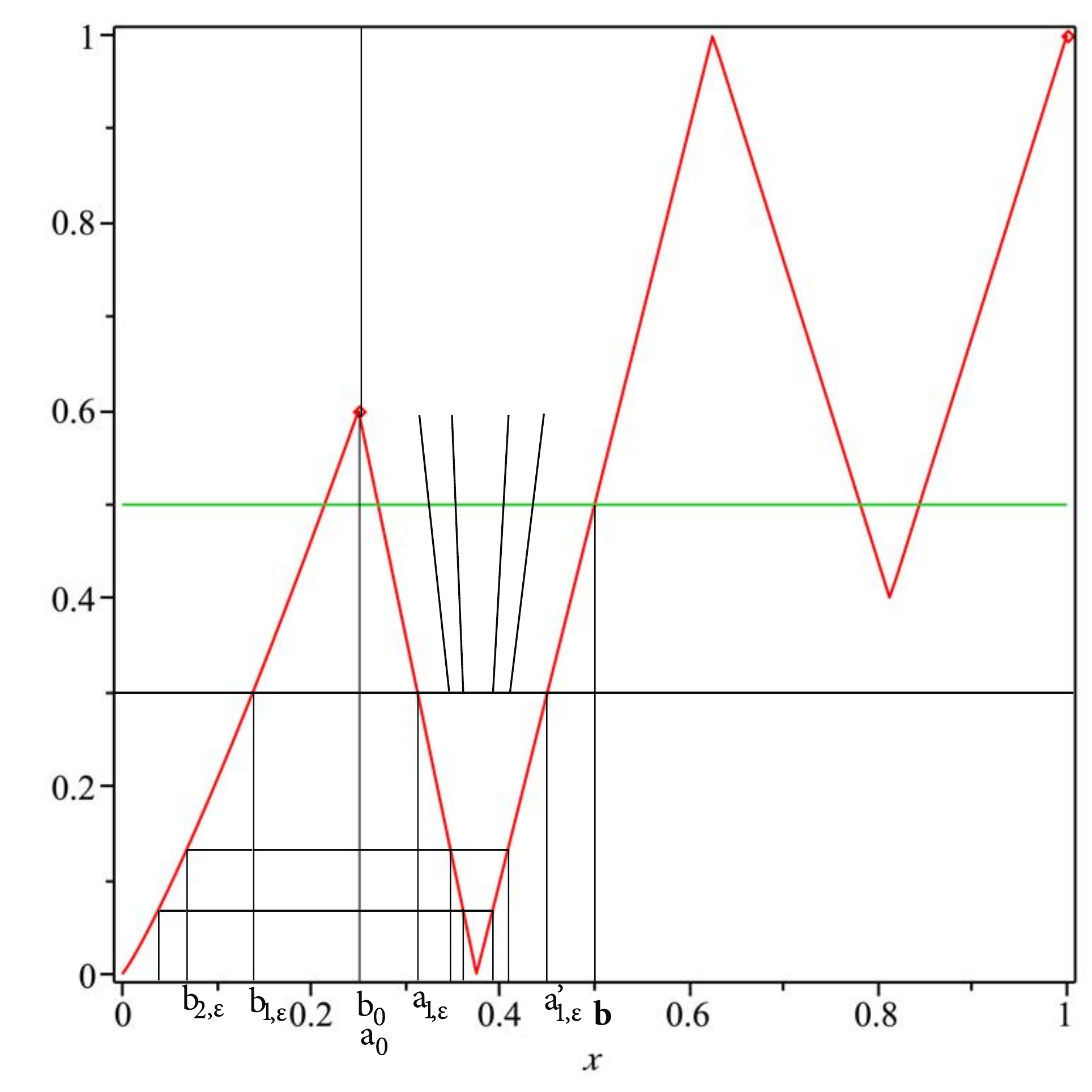} 
   \caption{The graph of induced system $\hat T_{\eps}$ for the values $\alpha=0.3$ and $\eps=0.1$}   \label{Fig2}
\end{figure}
\subsection{Pulling back the invariant density}
For all $\eps\ge 0$, we can find an a.c.i.m., $\mu_{\eps}$, of $T_{\eps}$  using the a.c.i.m., $\hat \mu_{\eps}$, of $\hat T_{\eps}$ \cite{Pi}. In particular, for any measurable set $B\subset [0,1]$, we have
\begin{equation}\label{measure}
\mu_{\eps}(B)=c_{\tau,\eps}\sum_{n=1}^{\infty}\sum_{j=0}^{\tau_{Z_{n,\eps}-1}}\hat\mu_{\eps}(T_{\eps}^{-j}B\cap Z_{n,\eps}),
\end{equation}
where $c^{-1}_{\tau,\eps}=\sum_{k=1}^{\infty} \tau_{Z_{k,\eps}} \hat\mu_{\eps}(Z_{k,\eps})$. In the following lemma we provide a lemma expressing the density of $\mu_{\eps}$ in terms of that of $\hat\mu_{\eps}$. This will play a crucial role in the proof of our main result.
\begin{lemma}\label{Le1}
Let $\mu_\eps$ be a $T_{\eps}$-acim, defined as in (\ref{measure}). Then, for $\eps\ge 0$, 
\begin{equation}\label{density}
h_{\eps}(x)=\left\{\begin{array}{ccc}
c_{\tau,\eps}\hat h_{\eps}(x)&\mbox{for $x\in \Delta$}\\
\text{ }\\
c_{\tau,\eps}\sum_{n=k+1}^{\infty}\left(\sum_{i=2}^{3} \frac{\hat h_{\eps}(T_{i,\eps}^{-1}T_{1,\eps}^{-(n-k-1)}x)}{|DT_{\eps}^{(n-k)}(T_{i,\eps}^{-1}T_{1,\eps}^{-(n-k-1)}x)|} \right) &\mbox{for $x\in W_{k,\eps}$}
\end{array}
\right. ,
\end{equation} 
where $h_{\eps}$ and $\hat h_{\eps}$ are the densities of $\mu_{\eps}$ and $\hat\mu_{\eps}$ respectively. 
\end{lemma}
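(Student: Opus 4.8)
The plan is to take (\ref{measure}) as the \emph{definition} of $\mu_\eps$ and simply read off its density, by expanding each summand, changing variables one branch at a time, and re-summing; since $\hat h_\eps\in L^1(m)$ and every term below is non-negative, Tonelli's theorem legitimises all interchanges of sums and integrals. First I would rewrite a generic summand of (\ref{measure}) as $\hat\mu_\eps(T_\eps^{-j}B\cap Z_{n,\eps})=\int_{Z_{n,\eps}}(\mathbf{1}_B\circ T_\eps^{j})\,\hat h_\eps\,dm$, and record the orbit-segment structure of the tower built over $\Delta$: the cells $\{Z_{n,\eps}\}_{n\ge1}$ form the first-return partition of $\Delta$ (hence partition $\Delta$ up to an $m$-null set), $\tau_{Z_{n,\eps}}=n$, $T_\eps(Z_{n,\eps})=W_{n-1,\eps}$, and $T_\eps(W_{m,\eps})=W_{m-1,\eps}$ for every $m\ge2$ (via the branch $T_{1,\eps}$, since $W_{m,\eps}\subset(0,a_0)$ for $m\ge1$ and $b_{m,\eps}=T_{1,\eps}^{-1}(b_{m-1,\eps})$). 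Consequently, for $x\in Z_{n,\eps}$ one has $T_\eps^{0}x\in Z_{n,\eps}\subset\Delta=W_{0,\eps}$ and $T_\eps^{j}x\in W_{n-j,\eps}$ for $1\le j\le n-1$.

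For $x\in\Delta$ the conclusion will be immediate. The sets $W_{k,\eps}$ with $k\ge1$ all lie in $(0,a_0)$, hence are disjoint mod $0$ from $\Delta=[a_0,1]$; so whenever $B\subset\Delta$ only the $j=0$ terms of (\ref{measure}) survive, and $\sum_{n\ge1}\hat\mu_\eps(B\cap Z_{n,\eps})=\hat\mu_\eps(B)=\int_B\hat h_\eps\,dm$. This gives $h_\eps=c_{\tau,\eps}\hat h_\eps$ on $\Delta$, the first line of (\ref{density}).

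Next I would fix $k\ge1$ and take $B\subset W_{k,\eps}$. By the orbit structure above, $T_\eps^{-j}B\cap Z_{n,\eps}$ is empty mod $0$ unless $n-j=k$, i.e.\ $j=n-k$, and the constraint $1\le j\le n-1$ then forces $n\ge k+1$; hence $\mu_\eps(B)=c_{\tau,\eps}\sum_{n\ge k+1}\int_{Z_{n,\eps}}(\mathbf{1}_B\circ T_\eps^{n-k})\,\hat h_\eps\,dm$. For $n\ge2$ the cell $Z_{n,\eps}$ has exactly two components, $(a_{n-1,\eps},a_{n,\eps})$ on which $T_\eps$ acts by $T_{2,\eps}$ and $(a'_{n,\eps},a'_{n-1,\eps})$ on which it acts by $T_{3,\eps}$; each component is mapped by that branch onto $W_{n-1,\eps}$, and the next $n-k-1$ iterates (all taking place in the $W_{m,\eps}$ with $m\ge k+1\ge2$) are governed by $T_{1,\eps}$. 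Thus on each component $T_\eps^{n-k}=T_{1,\eps}^{n-k-1}\circ T_{i,\eps}$ with $i\in\{2,3\}$, and this is a $C^2$ bijection of that component onto $W_{k,\eps}$ (here one uses that $T_{1,\eps}$ is strictly increasing, notwithstanding the neutral fixed point at $0$). Changing variables $y=T_\eps^{n-k}x$ on each of the two components, so that $x=T_{i,\eps}^{-1}T_{1,\eps}^{-(n-k-1)}y$, and adding the two Jacobian contributions yields
\begin{equation*}
\int_{Z_{n,\eps}}(\mathbf{1}_B\circ T_\eps^{n-k})\,\hat h_\eps\,dm=\int_B\sum_{i=2}^{3}\frac{\hat h_\eps\bigl(T_{i,\eps}^{-1}T_{1,\eps}^{-(n-k-1)}y\bigr)}{\bigl|DT_\eps^{(n-k)}\bigl(T_{i,\eps}^{-1}T_{1,\eps}^{-(n-k-1)}y\bigr)\bigr|}\,dm(y).
\end{equation*}
Summing over $n\ge k+1$ and moving the sum back inside the integral (Tonelli) will produce exactly the second line of (\ref{density}).

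The one genuinely delicate point is the orbit-segment bookkeeping: one must verify, from the explicit formulas (\ref{map})--(\ref{induced}) and the monotonicity of each branch $T_{i,\eps}$, that $T_\eps^{j}$ carries each component of $Z_{n,\eps}$ bijectively onto $W_{n-j,\eps}$ through precisely the branch composition $T_{1,\eps}^{j-1}\circ T_{i,\eps}$, so that the change-of-variables formula applies verbatim and no preimage is double-counted across the two components or across different $n$. Granting that, the remainder is just Tonelli's theorem together with the one-dimensional change of variables.
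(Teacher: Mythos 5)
Your proposal is correct and follows essentially the same route as the paper's proof: reduce (\ref{measure}) for $B\subseteq W_{k,\eps}$ to the single surviving index $j=n-k$, then change variables along the branch composition $T_{1,\eps}^{n-k-1}\circ T_{i,\eps}$ on the two components of $Z_{n,\eps}$ and re-sum. Your orbit-segment bookkeeping is, if anything, more explicit than the paper's, which speaks loosely of a splitting into three parts before the intersection with $Z_{n,\eps}$ discards the $T_{1,\eps}^{-1}$ piece and leaves only $i=2,3$.
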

\begin{proof}
By (\ref{measure}), for any measurable set $B\subset\Delta$, we have
$$\mu_{\eps}(B)=c_{\tau,\eps}\hat\mu_{\eps}(B).$$
Passing to the densities and for Lebesgue almost all 
$x\in \Delta$, we obtain
$$h_{\eps}(x)=c_{\tau,\eps}\hat h_{\eps}(x).$$  
We then extend $h_{\eps}$ to a bounded variation function as $\hat{h}_{\eps}$. This proves formula (\ref{density}) for $x\in \Delta$.\\ 

We now consider the case when $B\subseteq W_{k,\eps}$. First, suppose $B= W_{k,\eps}$, for some $k$. Then by (\ref{measure}), we have
$$\mu_{\eps}(W_{k,\eps})=c_{\tau,\eps}\sum_{n=1}^{\infty}\sum_{j=0}^{n-1}\hat\mu_{\eps}(T_{\eps}^{-j}W_{k,\eps}\cap Z_{n,\eps})=c_{\tau,\eps}\sum_{n=k+1}^{\infty}\hat\mu_{\eps}(Z_{n,\eps}).$$
Therefore, if $B\subseteq W_{k,\eps}$, we obtain
$$\mu_{\eps}(B)=c_{\tau,\eps}\sum_{n=k+1}^{\infty}\hat\mu_{\eps}(T_{\eps}^{-(n-k)}B\cap Z_{n,\eps}).$$
consequently,
$$
\int_B h_{\eps}dx=\sum_{n=k+1}^{\infty}\int_{T_{\eps}^{-(n-k)}B\cap Z_{n,\eps}}\hat h_{\eps(x)}dx.
$$
We now perform the change of variable $T^{n-k}y=x$ by observing that the set $B$ is pushed backward $n-k-1$ times with $T_{1,\eps}^{-1}$ and  then it splits
into three parts according to the actions of $T_{1,\eps}^{-1}, T_{2,\eps}^{-1},  T_{3,\eps}^{-1}$.
Therefore,
\begin{equation*}
\int_B h_{\eps}dx=c_{\tau,\eps}\sum_{n=k+1}^{\infty}\int_{B}\left(\sum_{i=2}^{3} \frac{\hat h_{\eps}(T_{i,\eps}^{-1}T_{1,\eps}^{-(n-k-1)}y)}{|DT_{\eps}^{(n-k)}(T_{i,\eps}^{-1}T_{1,\eps}^{-(n-k-1)}y)|} \right)dy,
\end{equation*}
where $DT_{\eps}^{(n-k)}(z)$ is the derivative of $T_{\eps}^{(n-k)}$ evaluated at the point $z$. Thus, for Lebesgue almost all $x\in W_{k,\eps}$ we obtain
\begin{equation*}
\begin{split}
h_{\eps}(x)&= c_{\tau,\eps}\sum_{n=k+1}^{\infty}\left(\sum_{i=2}^{3} \frac{\hat h_{\eps}(T_{i,\eps}^{-1}T_{1,\eps}^{-(n-k-1)}x)}{|DT_{\eps}^{(n-k)}(T_{i,\eps}^{-1}T_{1,\eps}^{-(n-k-1)}x)|} \right)\\
&=c_{\tau,\eps}\sum_{n=1}^{\infty}\left(\sum_{i=2}^{3} \frac{\hat h_{\eps}(T_{i,\eps}^{-1}T_{1,\eps}^{-(n-1)}x)}{|DT_{\eps}^{(n)}(T_{i,\eps}^{-1}T_{1,\eps}^{-(n-1)}x)|} \right).
\end{split}
\end{equation*}
The last expression shows that $h_{\eps}$ can be extended to a bounded variation function over all $\Delta^c$ and therefore over all the unit interval.
\end{proof}
\section{The problem of the original intermittent system}\label{problem} 
\subsection{The problem} In subsection \ref{intermittent} we noted that the intermittent map $T$ has exactly two ergodic invariant densities, $h_l$ supported on $[0,1/2]$ and $h_r$ supported on $[1/2,1]$. Moreover, for any $\eps>0$, the perturbed map has a unique invariant density $h_{\eps}$. The uniqueness of the invariant density $h_{\eps}$ is proved in the Appendix.\\

Our main goal is to prove that the invariant density of the perturbed system $h_{\eps}$ converges in the $L^1$-norm to a \textit{particular} convex combination of the invariant densities, $h_l$ and $h_r$, of the intermittent map. We define
\begin{equation}\label{extension}
h_{p}(x):=\left\{\begin{array}{ccc}
c_{\tau,p}\hat h_{p}(x)&\mbox{for $x\in \Delta$}\\
\text{ }\\
c_{\tau,p}\sum_{n=k+1}^{\infty}\left(\sum_{i=2}^{3} \frac{\hat h_p(T_{i}^{-1}T_{1}^{-(n-k-1)}x)}{|DT^{(n-k)}(T_{i}^{-1}T_{1}^{-(n-k-1)}x)|} \right) &\mbox{for $x\in W_{k}$}
\end{array}
\right. ,
\end{equation} 
where $c_{\tau,p}^{-1}=\sum_{k=1}^{\infty}k\hat\mu_p(Z_k)$, $\hat\mu_p=\hat\lambda_p\hat\mu_l+(1-\hat\lambda_p)\hat\mu_r$.
\begin{remark}\label{Re2}
Note that, by Lemma \ref{Le1}, $h_p$ is a $T$-invariant density. Moreover, since $T$ has exactly two ergodic invariant densities $h_l$ and $h_r$, $h_p$ is a convex combination of $h_l$ and $h_r$. In fact, $h_p$ is a \textit{particular} convex combination of $h_l$ and $h_r$. In the following proposition, we give an explicit representation of $h_p$ in terms of $h_l$ and $h_r$. 
\end{remark}
\begin{proposition}\label{prop2} 
The representation of $h_p$ in terms of $h_l$ and $h_r$ is given by
\begin{equation*}
h_p(x)=\lambda_p h_l(x)+(1-\lambda_p)h_r(x),
\end{equation*}
where $\lambda_p=\frac{\hat\lambda_pc_{\tau,r}}{\hat\lambda_pc_{\tau,r}+(1-\hat\lambda_p)c_{\tau,l}}$, $c_{\tau,l}^{-1}=\sum_{k=1}^{\infty}k\hat\mu_l(Z_k)$ and $c_{\tau,r}^{-1}=\sum_{k=1}^{\infty}k\hat\mu_r(Z_k)$.
\end{proposition}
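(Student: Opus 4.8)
The plan is to pin down $\lambda_p$ by computing the $\mu_p$-measure of the left interval $I_l=[0,1/2]$, where $\mu_p$ is the $T$-invariant probability measure with density $h_p$. By Remark~\ref{Re2} we already know $h_p=\lambda_p h_l+(1-\lambda_p)h_r$ for some $\lambda_p\in[0,1]$; since $h_l$ is a probability density supported on $I_l$, $h_r$ is supported on $I_r=[1/2,1]$, and $I_l\cap I_r=\{1/2\}$ is $m$-null, integrating this identity over $I_l$ gives $\lambda_p=\mu_p(I_l)$. Thus the whole proof reduces to evaluating $\mu_p(I_l)$ from the pull-back formula, the only real content being the careful bookkeeping of the normalisation constants.

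First I would move to the measure level. By Lemma~\ref{Le1} applied with $\eps=0$ to the $\hat T$-invariant measure $\hat\mu_p=\hat\lambda_p\hat\mu_l+(1-\hat\lambda_p)\hat\mu_r$, the density $h_p$ of (\ref{extension}) is the density of
\[
\mu_p(B)=c_{\tau,p}\sum_{n=1}^{\infty}\sum_{j=0}^{n-1}\hat\mu_p\!\left(T^{-j}B\cap Z_n\right),\qquad c_{\tau,p}^{-1}=\sum_{k\ge1}k\,\hat\mu_p(Z_k).
\]
Since this expression is linear in the inducing measure, I would split $\mu_p=c_{\tau,p}\left(\hat\lambda_p\,\tilde\mu_l+(1-\hat\lambda_p)\,\tilde\mu_r\right)$, where $\tilde\mu_\ast(B):=\sum_{n\ge1}\sum_{j=0}^{n-1}\hat\mu_\ast(T^{-j}B\cap Z_n)$ for $\ast\in\{l,r\}$. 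Each $\tilde\mu_\ast$ is the un-normalised tower pull-back of $\hat\mu_\ast$; it is $T$-invariant and absolutely continuous, and Kac's identity gives its total mass $\tilde\mu_\ast([0,1])=\sum_{n\ge1}n\,\hat\mu_\ast(Z_n)=c_{\tau,\ast}^{-1}$.

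The one step that needs an actual argument, and the one I expect to require the most care, is locating the supports of $\tilde\mu_l$ and $\tilde\mu_r$ — this is where the geometry of the map, rather than pure bookkeeping, enters. Here I would use that $\hat\mu_l$ is carried by $\bar\Delta_l=[a_0,b]\subseteq I_l$ while $T(I_l)\subseteq I_l$ (evident from the definition (\ref{map}) of $T$ at $\eps=0$): for any Borel $B$, if $x\in T^{-j}B\cap Z_n$ lies in $I_l$ then $T^jx\in B\cap I_l$, so $T^{-j}B\cap Z_n\cap I_l=T^{-j}(B\cap I_l)\cap Z_n\cap I_l$ and hence $\tilde\mu_l(B)=\tilde\mu_l(B\cap I_l)$, that is, $\tilde\mu_l$ is concentrated on $I_l$. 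The symmetric argument, with $\hat\mu_r$ carried by $\bar\Delta_r=[b,1]\subseteq I_r$ and $T(I_r)\subseteq I_r$, shows $\tilde\mu_r$ is concentrated on $I_r$; everything else below is formal.

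Finally I would assemble the pieces. Since $\tilde\mu_l(I_l)=\tilde\mu_l([0,1])=c_{\tau,l}^{-1}$ and $\tilde\mu_r(I_l)=\tilde\mu_r(\{1/2\})=0$ (the latter because $\tilde\mu_r$ lives on $I_r$ and is absolutely continuous), the computation gives $\lambda_p=\mu_p(I_l)=c_{\tau,p}\,\hat\lambda_p\,c_{\tau,l}^{-1}$. On the other hand, from $\hat\mu_p=\hat\lambda_p\hat\mu_l+(1-\hat\lambda_p)\hat\mu_r$ one has $c_{\tau,p}^{-1}=\sum_{k\ge1}k\,\hat\mu_p(Z_k)=\hat\lambda_p\,c_{\tau,l}^{-1}+(1-\hat\lambda_p)\,c_{\tau,r}^{-1}$, so that
\[
\lambda_p=\frac{\hat\lambda_p\,c_{\tau,l}^{-1}}{\hat\lambda_p\,c_{\tau,l}^{-1}+(1-\hat\lambda_p)\,c_{\tau,r}^{-1}}=\frac{\hat\lambda_p\,c_{\tau,r}}{\hat\lambda_p\,c_{\tau,r}+(1-\hat\lambda_p)\,c_{\tau,l}},
\]
after clearing denominators by $c_{\tau,l}c_{\tau,r}$, which is exactly the asserted expression.
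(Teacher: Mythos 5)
Your proof is correct, but it follows a genuinely different route from the paper's. The paper argues pointwise: it writes out the representations of $h_l$, $h_r$ and $h_p$ supplied by Lemma \ref{Le1} and formula (\ref{extension}), separately on $\Delta$ and on each $W_k$ (where only the $\hat h_l$-term survives, because the preimages $T_i^{-1}T_1^{-(n-k-1)}x$ lie in $Z_n\subset \Delta_l$ where $\hat h_r$ vanishes), substitutes $c_{\tau,p}^{-1}=\hat\lambda_p c_{\tau,l}^{-1}+(1-\hat\lambda_p)c_{\tau,r}^{-1}$, and reads off the coefficient of $h_l$; in doing so it never needs Remark \ref{Re2}, which it in effect reproves. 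You instead take Remark \ref{Re2} as given, reduce everything to the single scalar identity $\lambda_p=\mu_p(I_l)$, and evaluate it at the level of measures: linearity of the tower formula (\ref{measure}) in the inducing measure, Kac's identity $\tilde\mu_l([0,1])=c_{\tau,l}^{-1}$, $\tilde\mu_r([0,1])=c_{\tau,r}^{-1}$, and the forward invariance $T(I_l)\subseteq I_l$, $T(I_r)\subseteq I_r$ to show $\tilde\mu_l$ and $\tilde\mu_r$ are carried by $I_l$ and $I_r$ respectively (your set-theoretic verification of this, and the observation that the common point $\{1/2\}$ is null for the absolutely continuous $\tilde\mu_r$, are exactly the places where care is needed, and they are sound). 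The final algebra agrees with the stated formula after clearing denominators. What your route buys is brevity and a conceptual reading of $\lambda_p$ as the mass that the pulled-back measure assigns to the left interval, at the price of presupposing that $h_p$ is \emph{some} convex combination; what the paper's computation buys is the explicit pointwise identity (for instance $h_p=\lambda_p h_l$ on $\Delta^c$) obtained with no appeal to the ergodic decomposition.
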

\begin{proof}
First, using Lemma \ref{Le1}, we have
\begin{equation*}
h_{l}(x)=\left\{\begin{array}{ccc}
c_{\tau,l}\hat h_{l}(x)&\mbox{for $x\in \Delta$}\\
\text{ }\\
c_{\tau,l}\sum_{n=k+1}^{\infty}\left(\sum_{i=2}^{3} \frac{\hat h_{l}(T_{i}^{-1}T_{1}^{-(n-k-1)}x)}{|DT^{(n-k)}(T_{i}^{-1}T_{1}^{-(n-k-1)}x)|} \right) &\mbox{for $x\in W_{k}$}
\end{array}
\right. ,
\end{equation*} 
and for all $x\in[0,1]$
$$h_r(x)=c_{\tau,r}\hat h_r(x).$$
Moreover, 
$$c_{\tau,p}^{-1}=\sum_{k=1}^{\infty}k\hat\mu_p(Z_k)=\hat\lambda_p\sum_{k=1}^{\infty}k\hat\mu_l(Z_k)+(1-\hat\lambda_p)\sum_{k=1}^{\infty}k\hat\mu_r(Z_k)=\hat\lambda_pc_{\tau,l}^{-1}+(1-\hat\lambda_p)c_{\tau,r}^{-1}.$$
Therefore, using (\ref{extension}), for $x\in\Delta$, we have
\begin{equation*}
\begin{split}
h_p(x)&=c_{\tau,p}(\hat\lambda_p\hat h_l(x)+(1-\hat\lambda_p )h_r(x))\\
&=\hat\lambda_p\frac{c_{\tau,l}}{\hat\lambda_p+(1-\hat\lambda_p)c_{\tau,l}c_{\tau,r}^{-1}}\hat h_l(x)+(1-\hat\lambda_p)\frac{c_{\tau,r}}{\hat\lambda_pc_{\tau,r}c_{\tau,l}^{-1}+(1-\hat\lambda_p)}\hat h_r(x)\\
&=\frac{\hat\lambda_p}{\hat\lambda_p+(1-\hat\lambda_p)c_{\tau,l}c_{\tau,r}^{-1}}h_l(x)+\frac{(1-\hat\lambda_p)}{\hat\lambda_pc_{\tau,r}c_{\tau,l}^{-1}+(1-\hat\lambda_p)}\hat h_r(x)\\
&=\lambda_ph_l(x)+(1-\lambda_p)h_r(x).
\end{split}
\end{equation*}
Using (\ref{extension}) again, for $x\in W_k$, we obtain
\begin{equation*}
\begin{split}
h_p(x)&=c_{\tau,p}\hat\lambda_p\sum_{n=k+1}^{\infty}\left(\sum_{i=2}^{3} \frac{\hat h_l(T_{i}^{-1}T_{1}^{-(n-k-1)}x)}{|DT^{(n-k)}(T_{i}^{-1}T_{1}^{-(n-k-1)}x)|} \right)\\
&=\hat\lambda_p\frac{c_{\tau,l}}{\hat\lambda_p+(1-\hat\lambda_p)c_{\tau,l}c_{\tau,r}^{-1}}\sum_{n=k+1}^{\infty}\left(\sum_{i=2}^{3} \frac{\hat h_l(T_{i}^{-1}T_{1}^{-(n-k-1)}x)}{|DT^{(n-k)}(T_{i}^{-1}T_{1}^{-(n-k-1)}x)|} \right)\\
&=\frac{\hat\lambda_p}{\hat\lambda_p+(1-\hat\lambda_p)c_{\tau,l}c_{\tau,r}^{-1}}h_l=\lambda_p h_l(x).
\end{split}
\end{equation*}
\end{proof}
\subsection{Main result and the strategy of our proof}
The following theorem is the main result of the paper. 
\begin{theorem}\label{main}
Let $h_{\eps}$ be the unique invariant density of $T_{\eps}$. Then
\begin{enumerate}
\item 
$$\lim_{\eps\to 0}||h_{\eps}-h_p||_1=0.$$
\item Moreover,
$$\lim_{\eps\to 0}\frac{\mu_r(H_{r,\eps})}{\mu_l(H_{l,\eps})}=\frac{\lambda_p}{1-\lambda_p}.$$
\end{enumerate}
\end{theorem}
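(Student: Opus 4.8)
The plan is to reduce everything to the induced system $\hat T_\eps$ and to the pull-back formula of Lemma~\ref{Le1}, which I rewrite in operator form: if $\mathcal L_\eps$ denotes the Perron--Frobenius operator of $T_\eps$, then $h_\eps=c_{\tau,\eps}\sum_{n\ge 1}\sum_{j=0}^{n-1}\mathcal L_\eps^{\,j}(\hat h_\eps\,\mathbf 1_{Z_{n,\eps}})$ (this matches both Lemma~\ref{Le1} and (\ref{measure})), and $h_p$ is the same expression with $\eps=0$ and $\hat h_\eps$ replaced by $\hat h_p$. Remark~\ref{Re1} already supplies $\|\hat h_\eps-\hat h_p\|_1\to 0$ on $\Delta$, so for part (1) the two remaining tasks are: show that the normalizers converge, $c_{\tau,\eps}\to c_{\tau,p}$; and promote $L^1(\Delta)$-convergence of $\hat h_\eps$ to $L^1([0,1])$-convergence of the pull-backs. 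I would split $\|h_\eps-h_p\|_1=\int_\Delta|h_\eps-h_p|+\int_{[0,1/4)}|h_\eps-h_p|$, the first summand becoming trivial once $c_{\tau,\eps}\to c_{\tau,p}$ is known.

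First I would assemble four ingredients. (i) A uniform bound $\sup_{\eps}\|\hat h_\eps\|_\infty<\infty$: by the uniform Lasota--Yorke inequality of Proposition~\ref{prop1}(1), $\|\hat h_\eps\|_{BV(\Delta)}\le B/(1-\beta)$, and on an interval the $BV$-norm dominates the sup-norm. (ii) The intermittency estimate $|Z_{n,\eps}|,\,|W_{n,\eps}|\lesssim n^{-1-1/\alpha}$ with constant uniform in $\eps$ small, coming from the explicit branch $T_{1,\eps}(x)=x+4^{\alpha}(1+4\eps)x^{1+\alpha}$ via the Pomeau--Manneville asymptotics $b_{n,\eps}\sim\big(4^{\alpha}(1+4\eps)\alpha n\big)^{-1/\alpha}$, the coefficient staying in a fixed compact subset of $(0,\infty)$; since $\alpha<1$ this makes $\sum_n n|Z_{n,\eps}|$ convergent with $\eps$-uniform tails, and this is precisely where $\alpha<1$ is used. (iii) Continuity of the itinerary points, $b_{n,\eps}\to b_n$, $a_{n,\eps}\to a_n$, $a'_{n,\eps}\to a'_n$ for each fixed $n$, by iterating the $C^2$-convergence of branches in (B1). (iv) For each fixed $n$ and $0\le j\le n-1$, $\mathcal L_\eps^{\,j}(\hat h_\eps\,\mathbf 1_{Z_{n,\eps}})\to\mathcal L_0^{\,j}(\hat h_p\,\mathbf 1_{Z_{n,0}})$ in $L^1$: indeed $Z_{n,\eps}\to Z_{n,0}$, the orbits involved stay bounded away from the neutral point (where the maps are uniformly expanding and $C^2$-close), so the finitely many inverse branches converge in $C^1$ and change of variables passes to the limit, using $\hat h_\eps\to\hat h_p$ in $L^1$. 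Combining (i)--(iv) with Remark~\ref{Re1} gives $c_{\tau,\eps}^{-1}=\sum_n n\,\hat\mu_\eps(Z_{n,\eps})\to\sum_n n\,\hat\mu_p(Z_n)=c_{\tau,p}^{-1}$ (split at large $N$: the tail is uniformly small by (i)--(ii), the finitely many head terms converge by (iii) and $\hat h_\eps\to\hat h_p$), hence $c_{\tau,\eps}\to c_{\tau,p}\in(0,1)$.

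To finish part (1): on $\Delta$, $h_\eps=c_{\tau,\eps}\hat h_\eps\to c_{\tau,p}\hat h_p=h_p$ in $L^1(\Delta)$ at once. On $[0,1/4)=\bigcup_{k\ge1}W_{k,\eps}$ I would first discard a uniformly negligible deep part: for fixed large $K$,
\[
\int_{\bigcup_{k\ge K}W_{k,\eps}}h_\eps\;=\;c_{\tau,\eps}\sum_{k\ge K}\sum_{n\ge k+1}\hat\mu_\eps(Z_{n,\eps})\;\le\;c_{\tau,\eps}\sum_{n\ge K}n\,\hat\mu_\eps(Z_{n,\eps}),
\]
which is small uniformly in $\eps$ by (i)--(ii); the same bound holds for $h_p$, and $b_{K-1,\eps}\to b_{K-1}$ makes the two truncation domains agree up to a null set. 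On the remaining finitely many floors $W_{k,\eps}$, $1\le k<K$, I replace $W_{k,\eps}$ by the fixed interval $W_k$ (symmetric difference of vanishing measure, $h_p$ bounded on $W_k$), truncate the inner sum over $n$ at a large $N$ (tail again uniformly small), and compare the finitely many remaining terms one at a time. Each such term has the form $\hat h_\eps(\phi_\eps(x))\,J_\eps(x)$ where $\phi_\eps$ is a finite composition of inverse branches of $T_\eps$ (a contraction of uniformly bounded distortion, staying bounded away from $0$) and $J_\eps$ the corresponding Jacobian factor; by (iii)--(iv), $\phi_\eps\to\phi_0$ in $C^1$, $J_\eps\to J_0$ uniformly, and $\hat h_\eps\circ\phi_\eps\to\hat h_p\circ\phi_0$ in $L^1$ — writing $\hat h_\eps\circ\phi_\eps-\hat h_p\circ\phi_0=(\hat h_\eps-\hat h_p)\circ\phi_\eps+(\hat h_p\circ\phi_\eps-\hat h_p\circ\phi_0)$, the first term is bounded by $\|\hat h_\eps-\hat h_p\|_{L^1(\Delta)}$ up to the bounded distortion of $\phi_\eps$, and the second by $L^1$-continuity of composition applied to the fixed function $\hat h_p$, which is moreover Lipschitz on each of $\Delta_l,\Delta_r$ by Lemma~\ref{Le0}(1). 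Hence $\int_{[0,1/4)}|h_\eps-h_p|\to0$. This $L^1$-convergence on $\Delta^c$ — pushing merely-$L^1$ convergence of $\hat h_\eps$ through an infinite series of composition operators whose maps converge only in $C^0$ globally, while uniformly controlling the floors $W_{k,\eps}$ accumulating at the neutral point (whence the essential use of $\alpha<1$) — is the step I expect to be the main obstacle.

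For part (2) I would establish two pull-back identities for the holes and then invoke Lemma~\ref{Le0}(4) and Proposition~\ref{prop2}. The right hole is exact: $H_{r,\eps}\subset\Delta_r\subset\Delta$ lies in the domain $Z_{1,\eps}$ where $\hat T_\eps=T_\eps$, and $T_\eps(H_{r,\eps})\subset[1/2-\eps,1/2)\subset\Delta_l$, so $H_{r,\eps}=\hat H_{r,\eps}$; since $h_r=c_{\tau,r}\hat h_r$ on $\Delta$, $\mu_r(H_{r,\eps})=c_{\tau,r}\hat\mu_r(\hat H_{r,\eps})$. The left hole $H_{l,\eps}$ straddles $a_0=1/4$: its part inside $\Delta$ lies in $Z_{1,\eps}$ and equals $\hat H_{l,\eps}\cap Z_{1,\eps}$ (from the branches $T_{2,\eps},T_{3,\eps}$ one checks that no other floor carries a one-step left-to-right crossing), contributing $c_{\tau,l}\hat\mu_l(\hat H_{l,\eps}\cap Z_{1,\eps})$; its part in $[0,1/4)$ lies in the single floor $W_{1,\eps}$, and by the Kac formula (\ref{measure}) for $\mu_l$ — a point of $Z_{n,0}$ visits $W_{1,0}$ exactly at time $n-1$ — this part equals $c_{\tau,l}\sum_{n\ge2}\hat\mu_l\big(T_0^{-(n-1)}(H_{l,\eps}\cap W_{1,\eps})\cap Z_{n,0}\big)$, whereas $\hat\mu_l(\hat H_{l,\eps})-\hat\mu_l(\hat H_{l,\eps}\cap Z_{1,\eps})=\sum_{n\ge2}\hat\mu_l\big(T_\eps^{-(n-1)}(H_{l,\eps}\cap W_{1,\eps})\cap Z_{n,\eps}\big)$. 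By (ii)--(iv) these two series have the same $\eps\to0$ asymptotics (term-by-term ratios tend to $1$, with uniformly summable tails by the intermittency estimate), so $\mu_l(H_{l,\eps})\big/\big(c_{\tau,l}\hat\mu_l(\hat H_{l,\eps})\big)\to1$. Combining,
\[
\frac{\mu_r(H_{r,\eps})}{\mu_l(H_{l,\eps})}\;\longrightarrow\;\frac{c_{\tau,r}}{c_{\tau,l}}\cdot\lim_{\eps\to0}\frac{\hat\mu_r(\hat H_{r,\eps})}{\hat\mu_l(\hat H_{l,\eps})}\;=\;\frac{c_{\tau,r}}{c_{\tau,l}}\cdot\frac{\hat\lambda_p}{1-\hat\lambda_p}\;=\;\frac{\lambda_p}{1-\lambda_p},
\]
the middle equality by Lemma~\ref{Le0}(4)/Remark~\ref{Re1} and the last by Proposition~\ref{prop2}. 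Matching $H_{l,\eps}$ (concentrated near $a_0$) against $\hat H_{l,\eps}$ (spread over all floors $Z_{n,\eps}$) is the delicate point here, though of the same flavour as the obstacle in part (1).
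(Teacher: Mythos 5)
Your argument is correct in substance, but it is organized quite differently from the paper's, and part (2) follows a genuinely different route. For part (1) the paper splits $\|h_\eps-h_p\|_1$ into $(I)+(II)+(III)$ and controls the off-$\Delta$ terms by the \emph{pointwise} bound $\sup_{W_k}(|h_\eps|+|h_p|)\lesssim k$, which rests on Lemma \ref{Le3} and the Sublemma (the product of derivatives along a floor grows like $(n/k)^{\eta_k}$ with exponent built from a constant $d>1$); the summability of $\sum_k k\,|W_k|$ then lets the limit pass inside. You instead truncate twice ($k<K$, $n<N$) and control all tails by the \emph{exact integral identity} $\int_{W_{k,\eps}}(\text{$n$-th term})\,dx=c_{\tau,\eps}\hat\mu_\eps(Z_{n,\eps})$, so that the tail over $k\ge K$ is $\le c\sum_{n\ge K}n\hat\mu_\eps(Z_{n,\eps})\lesssim\sum_{n\ge K}n^{-1/\alpha}$; this buys you a proof that never needs Lemma \ref{Le3} or the $d>1$ sublemma, at the cost of one small hole: when you replace $W_{k,\eps}$ by $W_k$ you only say ``$h_p$ bounded on $W_k$'', but you also need $\int_{W_{k,\eps}\triangle W_k}h_\eps\to0$. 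This is easily repaired without reverting to Lemma \ref{Le3}: since every branch of $T_\eps$ has derivative $\ge1$, the $n$-th term contributes at most $\|\hat h_\eps\|_\infty\min(|Z_{n,\eps}|,|J|)$ to $\int_J h_\eps$ for an interval $J$, and summing over $n$ and letting $|J|\to0$ does the job. The remaining finite collection of terms is handled exactly as in the paper's treatment of $A_2^*$ and $A_2^\dagger$ (Lipschitz continuity of $\hat h_p$ from Lemma \ref{Le0}(1), $L^1$-convergence of $\hat h_\eps$, and $C^1$-convergence of the finitely composed inverse branches), so the two proofs converge at that point.

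For part (2) the paper proves the exact identities $\mu_\eps(H_{*,\eps})=c_{\tau,\eps}\hat\mu_\eps(\hat H_{*,\eps})$ for the \emph{perturbed} invariant measure, deduces $\mu_\eps(H_{r,\eps})/\mu_\eps(H_{l,\eps})=\hat\mu_\eps(\hat H_{r,\eps})/\hat\mu_\eps(\hat H_{l,\eps})$, and then trades $\mu_\eps$ for $\mu_l,\mu_r$ and $\hat\mu_\eps$ for $\hat\mu_l,\hat\mu_r$ using part (1) and Proposition \ref{prop1}(2) --- a step that implicitly compares measures on sets whose measure itself tends to zero. You bypass $\mu_\eps$ and part (1) entirely: $H_{r,\eps}=\hat H_{r,\eps}$ gives $\mu_r(H_{r,\eps})=c_{\tau,r}\hat\mu_r(\hat H_{r,\eps})$ exactly, and for the left hole you match the Kac series for $\mu_l(H_{l,\eps})$ (built from $T_0^{-(n-1)}$) against the series for $\hat\mu_l(\hat H_{l,\eps})$ (built from $T_\eps^{-(n-1)}$), concluding via the limiting hole ratio of Lemma \ref{Le0}(4) and Proposition \ref{prop2}. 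This is cleaner in that the only measures ever compared on the shrinking holes are $\hat\mu_l$ and $\hat\mu_r$ themselves; but the burden shifts to your ``same asymptotics'' claim, which must be proved at order $\eps$, not merely $o(1)$: you need $u_n(\eps)/\eps$ and $v_n(\eps)/\eps$ to converge term by term to a common limit \emph{and} to be dominated uniformly in $\eps$ by a summable sequence. Both facts do follow from the bounded-distortion estimate already used in the proof of Lemma \ref{Le0}(4) (each term is $\lesssim\eps\,\|\hat h_l\|_\infty\,|DT^{(n-1)}|^{-1}$ with $\sum_n|DT^{(n-1)}(a_{n-1})|^{-1}<\infty$), so the gap is one of exposition rather than substance, but it should be spelled out since it is the crux of your part (2).
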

To prove (1) of Theorem \ref{main}, we use the following strategy:
\begin{enumerate}
\item First we estimate
\begin{equation}\label{strategy}
\begin{split}
||h_{\eps}-h_p||_1&\le\int_{\Delta}|h_{\eps}-h_p|dx+\sum_{k=1}^{\infty}\int_{W_{k}\setminus (W_{k,\eps}\cap W_k)}|h_{\eps}-h_p|dx\\
&+\sum_{k=1}^{\infty}\int_{W_{k,\eps}\cap W_k}|h_{\eps}-h_p|dx= {(I)}+ (II) +(III).
\end{split}
\end{equation}
\item In (I), we exploit the representations of $h_p$, $h_{\eps}$ on $\Delta$, and use Remark \ref{Re1} to conclude that the limit of (I) is zero as $\eps\to 0$.
\item In (II), we obtain an upper bound
$$\sup_{x\in W_{k}\setminus (W_{k,\eps}\cap W_k)}|h_{\eps}(x)|+\sup_{x\in W_{k}\setminus (W_{k,\eps}\cap W_k)}|h_{\eps}(x)|\lesssim k.$$
 Since the left boundary point of $W_k$, $b_k$, scales like $k^{-\frac{1}{\alpha}}$, we have just recovered, with a different technique, the well known fact that the density of the intermittent map behaves like $x^{-\alpha}$ in the neighbourhood of the neutral fixed point. Consequently, this implies that 
$$(II)\lesssim \sum_{k=1}^{\infty}k|W_{k}\setminus (W_{k,\eps}\cap W_k)|\simeq\sum_{k=1}\frac{1}{k^{1/\alpha}}.$$
and the uniform convergence of the series allows us to bring the limit inside for $\eps\rightarrow 0$.
\item In (III) $h_{\eps}$ and $h_{p}$ can be compared on $W_{k,\eps}\cap W_k$ via their representations in terms of $\hat h_{\eps}$ and $\hat h_p$ respectively. We then show that (III) is summable. This allows us to move the limit $\eps\to 0$ inside the sum to conclude that the limit of (III) equals zero. In this part, we invoke two results from the induced system. Namely that $\lim_{\eps\to 0}||\hat h_{\eps}-\hat h_p||_1=0$, and the fact that $\hat h_{p}$ is Lipschitz continuous on $[a_0,b]$. 
\end{enumerate}

\noindent To prove (2) of Theorem \ref{main}, we use the representation of $\lambda_p$ in Proposition \ref{prop2} and part (1) of Theorem \ref{main}.
\section{Proof of Theorem \ref{main}}
Before proving Theorem \ref{main}, we state and prove two lemmas. We first observe that $T_{\eps}(a_{k,\eps})=b_{k,\eps}$ and $b_{k,\eps}\lesssim k^{-\frac{1}{\alpha}}$, see for instance Lemma 3.2 in \cite{LSV}. Thus, $|Z_{k,\eps}|\lesssim k^{-\frac{1}{\alpha}-1}$. In fact we precisely have $|Z_{k,\eps}|\le C_{\ve} k^{-\frac{1}{\alpha}-1}$, where $C_{\ve}=1+O(\eps)$. In the next Lemma, $\tilde{C}$ will denote a constant which is independent of $\eps$. $\tilde C$ may have different values in successive uses.  
\begin{lemma}\label{Le2}
\text{ }
\begin{enumerate}
\item For $\eps\ge 0$, $\sum_{k=1}^{\infty}k\hat\mu_{\eps}(Z_{k,\eps})\le \tilde{C}$.
\item $\lim_{\eps\to 0}|c_{\tau,\eps}-c_{\tau,p}|=0$.
\end{enumerate}
\end{lemma}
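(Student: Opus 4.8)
The plan is to prove (1) by the standard comparison between the invariant measure of the induced map and Lebesgue-type estimates, and then deduce (2) from (1) together with the convergence $\hat h_\eps \to \hat h_p$ in $L^1$ (Remark \ref{Re1}) and the uniform tail control provided by (1). For statement (1), first note that $\hat\mu_\eps(Z_{k,\eps}) = \int_{Z_{k,\eps}} \hat h_\eps\, dm$, and by Proposition \ref{prop1}(1) (applied to the induced system, as justified in Remark \ref{Re1}) the densities $\hat h_\eps$ are uniformly bounded in $BV(\Delta)$, hence uniformly bounded in $L^\infty$ by a constant $\tilde C$ independent of $\eps$. Combined with the geometric estimate $|Z_{k,\eps}| \le C_\eps k^{-1/\alpha - 1}$ with $C_\eps = 1 + O(\eps)$ recorded just before the lemma, this gives
\begin{equation*}
\sum_{k=1}^\infty k\,\hat\mu_\eps(Z_{k,\eps}) \le \tilde C \sum_{k=1}^\infty k\,|Z_{k,\eps}| \le \tilde C\, C_\eps \sum_{k=1}^\infty k^{-1/\alpha} < \infty,
\end{equation*}
uniformly in $\eps$, since $\alpha \in (0,1)$ makes $1/\alpha > 1$. (For $\eps = 0$ this is the same computation with $C_0 = 1$.)

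For statement (2), recall that $c_{\tau,\eps}^{-1} = \sum_{k=1}^\infty k\,\hat\mu_\eps(Z_{k,\eps})$ and $c_{\tau,p}^{-1} = \sum_{k=1}^\infty k\,\hat\mu_p(Z_k)$, where $\hat\mu_p = \hat\lambda_p \hat\mu_l + (1-\hat\lambda_p)\hat\mu_r$. It suffices to show $c_{\tau,\eps}^{-1} \to c_{\tau,p}^{-1}$, since part (1) and the analogous bound for $\hat\mu_p$ guarantee these quantities are bounded away from $0$ and $\infty$, so inversion is continuous. I would split
\begin{equation*}
\left| c_{\tau,\eps}^{-1} - c_{\tau,p}^{-1} \right| \le \sum_{k=1}^{N} k\left| \hat\mu_\eps(Z_{k,\eps}) - \hat\mu_p(Z_k) \right| + \sum_{k>N} k\,\hat\mu_\eps(Z_{k,\eps}) + \sum_{k>N} k\,\hat\mu_p(Z_k).
\end{equation*}
The last two tails are made smaller than any prescribed $\eta$ by choosing $N$ large, uniformly in $\eps$, by the uniform summability from part (1). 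For the finite sum, write $\hat\mu_\eps(Z_{k,\eps}) - \hat\mu_p(Z_k) = \int_{Z_{k,\eps}} (\hat h_\eps - \hat h_p)\,dm + \int (\hat h_p \mathbf{1}_{Z_{k,\eps}} - \hat h_p \mathbf{1}_{Z_k})\,dm$; the first piece is bounded by $\|\hat h_\eps - \hat h_p\|_1 \to 0$, and the second by $\|\hat h_p\|_\infty \cdot |Z_{k,\eps} \triangle Z_k| \to 0$ since the endpoints $a_{k,\eps}, a'_{k,\eps}$ depend continuously (indeed $C^2$) on $\eps$ by the construction in \S\ref{intermittent} and assumption (B1). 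Since $N$ is fixed, the finite sum tends to $0$ as $\eps \to 0$; letting $\eta \to 0$ finishes the proof.

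The main obstacle is the uniform-in-$\eps$ tail control: one must be sure that the constant $\tilde C$ bounding $\sum_k k\,\hat\mu_\eps(Z_{k,\eps})$ does not degenerate as $\eps \to 0$. This is exactly why the remark following Proposition \ref{prop1} is needed — it guarantees the Lasota--Yorke constants $\beta, B$, and hence the $BV$ (and $L^\infty$) bounds on $\hat h_\eps$, are uniform for small $\eps$ — and why the sharpened geometric estimate $|Z_{k,\eps}| \le C_\eps k^{-1/\alpha-1}$ with $C_\eps = 1 + O(\eps)$ was stated explicitly just before the lemma. Everything else is a routine $\eps/2$-argument splitting a convergent series into a fixed head and a uniformly small tail.
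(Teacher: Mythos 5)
Your proposal is correct and follows essentially the same route as the paper: part (1) via the uniform $BV$ (hence $L^\infty$) bound on $\hat h_\eps$ from the Lasota--Yorke inequality combined with $|Z_{k,\eps}|\lesssim k^{-1/\alpha-1}$, and part (2) via uniform tail control plus termwise convergence using $\|\hat h_\eps-\hat h_p\|_1\to 0$ and $m(Z_{k,\eps}\triangle Z_k)\to 0$. The only cosmetic difference is that the paper handles the inversion up front by noting $c_{\tau,\eps},c_{\tau,p}\le 1$ so that $|c_{\tau,\eps}-c_{\tau,p}|\le\sum_k k|\hat\mu_\eps(Z_{k,\eps})-\hat\mu_p(Z_k)|$, whereas you prove convergence of the reciprocals and invoke continuity of inversion at the end.
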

\begin{proof}
(1)  By Proposition \ref{prop1}, and the fact that the $L^{\infty}$-norm (w.r.t. $m$) is bounded by the BV-norm, we have
\begin{equation*}
\begin{split}
\sum_{k=1}^{\infty}k\hat\mu_{\eps}(Z_{k,\eps})&\le ||\hat h_{\eps}||_{\infty}\sum_{k=1}^{\infty}k|Z_{k,\eps}|\le \tilde{C}(\frac{B}{1-\beta})\sum_{k=1}^{\infty}\frac{1}{k^{1/\alpha}}
 \le \tilde{C}.
\end{split}
\end{equation*}
To prove (2), we first observe that the constants $c_{\tau,\eps}$ and $c_{\tau,p}$ are less or equal to $1$; then
\begin{equation*}
|c_{\tau,\eps}- c_{\tau,p}|=\left |\frac{1}{\sum_{k=1}^{\infty}k\hat\mu_{\eps}(Z_{k,\eps})}-\frac{1}{\sum_{k=1}^{\infty}k\hat\mu_p(Z_{k})}\right | \le \sum_{k=1}^{\infty} k|\hat\mu_{\eps}(Z_k,\eps)-\hat\mu_p(Z_k)|. 
\end{equation*}
By (1) the previous series is uniformly convergent in $\eps$. Therefore, it is enough to show that for any $k$, $|\hat\mu_{\eps}(Z_k,\eps)-\hat\mu_p(Z_k) |$ converges to zero as $\eps\to 0$.  We have $$|\hat\mu_{\eps}(Z_k,\eps)-\hat\mu_p(Z_k) |=\frac{1}{m(\Delta)}\left|\int_{Z_{k,\eps}}\hat{h}_{\eps}dx-\int_{Z_{k}}\hat{h}dx\right|\le$$
$$
\frac{1}{m(\Delta)}\left|\int_{Z_{k,\eps}\cap Z_k}\hat{h}_{\eps}dx+\int_{Z_{k,\eps}\backslash (Z_{k,\eps}\cap Z_k)}\hat{h}_{\eps}dx-\int_{Z_k\cap Z_{k,\eps}}\hat{h}dx-\int_{Z_k\backslash(Z_k\cap Z_{k,\eps})}\hat{h}dx\right|\le
$$
$$
\frac{1}{m(\Delta)}\left[\int_{Z_{k,\eps}\cap Z_k}|\hat{h}_{\eps}-\hat{h}|dx+ 2 ||\hat h_{\eps}||_{\infty} m(Z_{k,\eps}\Delta Z_k)\right]
$$
and the first term in the square bracket goes to zero 
 because $\lim_{\eps\to 0}||\hat h_{\eps}-\hat h||_1=0$.
\end{proof}
\begin{lemma}\label{Le3}
 For $\eps\ge 0$, $x\in W_{k,\eps}$ and $k$ large we have
\begin{enumerate}
\item
$$|DT^{(n-k)}(T_{i}^{-1}T_{1}^{-(n-k-1)}x)|\ge \left(\frac{n}{k+2}\right)^{\eta_k},$$
where $i=2,3$, $\eta_k=\frac{d(k+2)}{k+2+d}$ for some $d>1$. 
\item $$\sum_{n=k+1}^{\infty}\frac{1}{|DT^{(n-k)}(T_{i}^{-1}T_{1}^{-(n-k-1)}x)|}  \lesssim  k.$$
\end{enumerate}
\end{lemma}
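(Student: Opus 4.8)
The plan is to track how the first-return branches $T_{i,\eps}^{-1}T_{1,\eps}^{-(n-k-1)}$ land points near the neutral fixed point and then estimate the derivative of $T_\eps^{(n-k)}$ along the resulting orbit. A point $x\in W_{k,\eps}$ pulled back once by one of the expanding branches $T_{2,\eps}^{-1}$ or $T_{3,\eps}^{-1}$ lands in $Z_{n,\eps}\subset(a_0,1)$ with $n\ge k+1$, and from there the orbit $y, T_\eps y,\dots,T_\eps^{n-k-1}y$ climbs along the parabolic branch $T_{1,\eps}$ through the intervals $W_{n-1,\eps},W_{n-2,\eps},\dots,W_{k,\eps}$ before the final expanding step. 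So the chain rule gives
\begin{equation*}
|DT_\eps^{(n-k)}(T_{i,\eps}^{-1}T_{1,\eps}^{-(n-k-1)}x)| = |T_{i,\eps}'|\cdot\prod_{j=k+1}^{n-1}|T_{1,\eps}'(y_j)|,
\end{equation*}
where $y_j\in W_{j,\eps}$. The first thing I would do is record that $|T_{i,\eps}'|\ge 2$ for $i=2,3$ and that, since $b_{j,\eps}\approx j^{-1/\alpha}$ (the estimate quoted from \cite{LSV} just before the lemma), one has $T_{1,\eps}'(y)=1+(1+\alpha)4^\alpha(1+4\eps)y^\alpha = 1 + \frac{c}{j}(1+O(\eps))$ uniformly for $y\in W_{j,\eps}$, for a constant $c=c(\alpha)$; here $\frac{c}{j}$ comes from $y^\alpha\approx b_{j,\eps}^\alpha \approx j^{-1}$. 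This is where the constant $d>1$ in the statement enters: for $k$ large and $\eps$ small, $T_{1,\eps}'(y_j)\ge 1+\frac{d}{j}$ on $W_{j,\eps}$. Part (1) then follows from $\prod_{j=k+1}^{n-1}(1+\frac d j)\ge \exp\!\big(d\sum_{j=k+1}^{n-1}\frac{1}{j+d}\big)\ge \exp\!\big(\frac{d(k+2)}{k+2+d}\log\frac{n}{k+2}\big) = (n/(k+2))^{\eta_k}$, using $\sum_{j=k+1}^{n-1}\frac1{j+d}\ge \int_{k+1}^{n}\frac{dt}{t+d}\ge \frac{1}{1+d/(k+2)}\log\frac{n}{k+2}$; one keeps the factor $|T_{i,\eps}'|\ge 2 \ge 1$ just to absorb the comparison $n$ versus $k+2$.

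Part (2) is then a summation. From (1),
\begin{equation*}
\sum_{n=k+1}^{\infty}\frac{1}{|DT_\eps^{(n-k)}(\cdots)|} \le (k+2)^{\eta_k}\sum_{n=k+1}^{\infty} n^{-\eta_k} \lesssim (k+2)^{\eta_k}\cdot \frac{(k+1)^{1-\eta_k}}{\eta_k-1},
\end{equation*}
valid once $\eta_k>1$, which holds for $k$ large since $\eta_k=\frac{d(k+2)}{k+2+d}\to d>1$. Since $\eta_k-1 = \frac{(d-1)(k+2)-d}{k+2+d}$ stays bounded below by a positive constant for $k$ large, and $(k+2)^{\eta_k}(k+1)^{1-\eta_k}\approx (k+2)$, the right-hand side is $\lesssim k$, which is exactly the claim. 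One should double-check the constants are uniform in $\eps$: the only $\eps$-dependence is through $C_\eps = 1+O(\eps)$ in $b_{j,\eps}\le C_\eps j^{-1/\alpha}$ and through the $(1+4\eps)$ factor in $T_{1,\eps}'$, both harmless for $\eps$ small, so all the implied constants can be taken independent of $\eps$.

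The main obstacle is the bookkeeping in the first step: one must be sure that the backward orbit really does pass through $W_{n-1,\eps},\dots,W_{k+1,\eps}$ in order (so that the derivative lower bounds $1+d/j$ apply with the correct index $j$), and that the single non-parabolic step contributes a factor $\ge 1$ rather than something that could be small — this is why the lemma is stated for $i=2,3$ only and for $k$ large, and why one wants $s_r$, $s_l$ fixed and the branches $T_{2,\eps},T_{3,\eps}$ uniformly expanding. A secondary subtlety is making the passage from the product $\prod(1+d/j)$ to the clean power $(n/(k+2))^{\eta_k}$ rigorous while keeping $\eta_k$ in the stated closed form; the integral comparison above does this, but one has to be slightly careful that the exponent one extracts is $\le$ the true one, i.e. that we are proving a lower bound on the derivative, so every estimate must go the right way.
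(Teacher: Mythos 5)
Your argument follows the same route as the paper's: factor $DT^{(n-k)}$ along the backward orbit, which visits $W_{n-1,\eps},\dots,W_{k+1,\eps}$, bound each parabolic factor below by $1+d/j$, pass from the product to the power $(n/(k+2))^{\eta_k}$ via $\log(1+u)\ge u/(1+u)$ and an integral comparison, and then sum the tail of $\sum_n n^{-\eta_k}$ using $\eta_k>1$ for $k$ large. The bookkeeping, the summation in part (2), and the remarks on uniformity in $\eps$ are all fine; the slight mismatch between the exponent your product actually yields, $\frac{d(k+1)}{k+1+d}$ (your smallest denominator is $k+1$, the paper's is $k+2$), and the stated $\eta_k=\frac{d(k+2)}{k+2+d}$ is immaterial, since one may shrink $d$ slightly.

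The genuine gap is the assertion that $d>1$. Everything in part (2) hinges on it, and it does not follow from what you cite. The estimate quoted before the lemma, $b_{k,\eps}\lesssim k^{-1/\alpha}$, is an upper bound, whereas you need a lower bound $b_j\ge c\,j^{-1/\alpha}$, and, crucially, you need the constant to satisfy $c^\alpha 4^\alpha(1+\alpha)>1$. Writing $b_{j}^\alpha\approx j^{-1}$ with an unspecified constant only gives $DT_1(y_j)\ge 1+d/j$ for \emph{some} $d>0$, and $d\le 1$ would destroy both the convergence of the series and the $O(k)$ bound: $d=1$ is exactly the borderline. The paper devotes a Sublemma to this point, proving by induction that $b_k\ge ck^{-1/\alpha}$ with the explicit choice $c=\frac{1}{4(1+\alpha)^{1/\alpha}}+\delta$, for which $d=c^\alpha4^\alpha(1+\alpha)>1$. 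Alternatively, since the lemma is only claimed for $k$ large, you could invoke the precise asymptotic $b_j\sim(\alpha 4^\alpha j)^{-1/\alpha}$, which gives $DT_1(b_j)\sim 1+\frac{1+\alpha}{\alpha j}$ and hence any $d<1+\frac{1}{\alpha}$; but some such quantitative input must be supplied, and the two-sided ``$\approx$'' you rely on is not enough.
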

\begin{remark}
Before proving Lemma \ref{Le3} we need two observations:
\begin{itemize}
\item The same proof holds for $T_{\eps}$ with all the constants involved uniformly bounded in $\eps$ for $\eps$ small. Moreover it will be clear in the proof of the theorem below that we can also take $x$ not in $W_k$ but in one of the two similar sets adjacent to it: the proof will not change.
\item It will be extremely important to have the constant $d$ strictly larger than $1$. Working with the map $T(x)=x+4^{\alpha}x^{1+\alpha}$, $x\in [0,1/4]$, such a constant will be $d= c^{\alpha} \  4^{\alpha}(1+\alpha)$, where the constant $c$ satisfies $b_k\ge c k^{-\frac{1}{\alpha}}$. This is done in the next sublemma.
\end{itemize}
\end{remark}
\begin{sublemma}
 Let $b_k=T_1^{-1}b_{k-1}$, with $b_0=1/4$. Then there exists $c$ independent of $k$ for which  $b_k\ge c k^{-\frac{1}{\alpha}}$, $k\ge 1$  and  $d:= c^{\alpha} \  4^{\alpha}(1+\alpha)>1$.
\end{sublemma}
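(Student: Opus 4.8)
The sublemma contains two assertions: a lower bound $b_k \ge c\,k^{-1/\alpha}$ for the backward iterates of the neutral branch $T_1(x)=x+4^{\alpha}x^{1+\alpha}$ starting from $b_0=1/4$, and the quantitative statement that the resulting constant $c$ can be taken large enough that $d:=c^{\alpha}4^{\alpha}(1+\alpha)>1$. The standard route is to pass to the conjugated variable; I would set $u_k := b_k^{-\alpha}$ and analyze the recursion induced on $u_k$. Since $b_{k-1}=T_1(b_k)=b_k+4^{\alpha}b_k^{1+\alpha}=b_k(1+4^{\alpha}b_k^{\alpha})$, raising to the power $-\alpha$ and using the expansion $(1+t)^{-\alpha}=1-\alpha t+O(t^2)$ for $t=4^{\alpha}b_k^{\alpha}\to 0$ gives
\begin{equation*}
u_{k-1}=u_k\,(1+4^{\alpha}b_k^{\alpha})^{-\alpha}=u_k\bigl(1-\alpha 4^{\alpha}b_k^{\alpha}+O(b_k^{2\alpha})\bigr)=u_k-\alpha 4^{\alpha}+O(b_k^{\alpha}),
\end{equation*}
so that $u_k - u_{k-1}=\alpha 4^{\alpha}+O(b_k^{\alpha})=\alpha 4^{\alpha}+o(1)$. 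Telescoping from $0$ to $k$ and using $u_0=b_0^{-\alpha}=4^{\alpha}$ yields $u_k = \alpha 4^{\alpha}k + O(1) + \sum_{j\le k}o(1)$; a Cesàro/Stolz argument then gives $u_k/k \to \alpha 4^{\alpha}$, i.e. $b_k^{-\alpha}\sim \alpha 4^{\alpha}k$, hence $b_k \sim (\alpha 4^{\alpha})^{-1/\alpha}k^{-1/\alpha}$. In particular $b_k \ge c\,k^{-1/\alpha}$ holds for \emph{every} $k\ge 1$ once $c$ is chosen slightly below $(\alpha 4^{\alpha})^{-1/\alpha}$ (adjusting by a finite check over the first few $k$ if necessary).

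For the second claim one must be careful: the asymptotically optimal constant is $c_{\mathrm{opt}}=(\alpha 4^{\alpha})^{-1/\alpha}$, for which $c_{\mathrm{opt}}^{\alpha}4^{\alpha}(1+\alpha)=\frac{1+\alpha}{\alpha}>1$ since $\alpha\in(0,1)$. So $d=c^{\alpha}4^{\alpha}(1+\alpha)>1$ is automatic for the sharp constant, and by continuity remains $>1$ for any $c$ sufficiently close to $c_{\mathrm{opt}}$ from below. The point is therefore not merely to extract \emph{some} lower bound but to verify that the sharp leading constant $(\alpha 4^{\alpha})^{-1/\alpha}$ is genuinely a lower bound for all $k\ge 1$ — this needs the monotonicity of the increments $u_k-u_{k-1}$, or at least a one-sided control. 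Here I would observe that $u_k - u_{k-1}=u_k\bigl(1-(1+4^{\alpha}b_k^{\alpha})^{-\alpha}\bigr)$ and that the function $t\mapsto 1-(1+t)^{-\alpha}$ is concave with derivative at $0$ equal to $\alpha$, so $1-(1+t)^{-\alpha}\le \alpha t$ for $t\ge 0$; this gives $u_k-u_{k-1}\le \alpha 4^{\alpha}u_k b_k^{\alpha}=\alpha 4^{\alpha}$ exactly, with no error term. Telescoping this clean inequality downward from $u_0=4^{\alpha}$ gives $u_k \le u_0 + \alpha 4^{\alpha}k = 4^{\alpha}(1+\alpha k)\le 4^{\alpha}(1+\alpha)k$ for $k\ge 1$, hence $b_k^{-\alpha}\le 4^{\alpha}(1+\alpha)k$, i.e. $b_k \ge \bigl(4^{\alpha}(1+\alpha)k\bigr)^{-1/\alpha}$. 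Thus we may take $c=\bigl(4^{\alpha}(1+\alpha)\bigr)^{-1/\alpha}$, which indeed is independent of $k$ and valid for all $k\ge 1$.

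Finally, with this explicit $c$ one computes directly
\begin{equation*}
d = c^{\alpha}\,4^{\alpha}(1+\alpha) = \frac{1}{4^{\alpha}(1+\alpha)}\cdot 4^{\alpha}(1+\alpha) = 1,
\end{equation*}
which is \emph{not} strictly greater than $1$ — so the crude inequality $1-(1+t)^{-\alpha}\le\alpha t$ is exactly on the boundary and must be improved. This is the main obstacle. The fix is to keep the next order term: since $1-(1+t)^{-\alpha}\le \alpha t - \tfrac{\alpha(\alpha+1)}{2}t^2 + O(t^3)$ is false in general but $1-(1+t)^{-\alpha} < \alpha t$ \emph{strictly} for all $t>0$, one gets $u_k-u_{k-1}<\alpha 4^{\alpha}$ strictly, and more quantitatively $u_k-u_{k-1}\le \alpha 4^{\alpha} - c_1 b_k^{\alpha}$ for some $c_1>0$ and $k$ large; since $b_k^{\alpha}\asymp 1/k$ the accumulated gain is $\asymp \log k$, still not enough to change the leading constant. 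The genuinely correct statement is therefore the \emph{asymptotic} one, $b_k \sim (\alpha 4^{\alpha})^{-1/\alpha}k^{-1/\alpha}$, from which one reads off that for any $c<(\alpha 4^{\alpha})^{-1/\alpha}$ the bound $b_k\ge c\,k^{-1/\alpha}$ holds for all large $k$ (and for all $k\ge 1$ after enlarging the constant implicit in $\gtrsim$, or checking finitely many terms), while $c^{\alpha}4^{\alpha}(1+\alpha)$ can be made as close as desired to $\frac{1+\alpha}{\alpha}>1$; in particular it exceeds $1$. So the proof I would write first establishes the sharp asymptotics $u_k \sim \alpha 4^{\alpha}k$ via Stolz--Cesàro applied to the increment estimate $u_k-u_{k-1}=\alpha 4^{\alpha}+O(b_k^{\alpha})$, then chooses $c$ to be, say, $(1-\eta)(\alpha 4^{\alpha})^{-1/\alpha}$ with $\eta>0$ small enough that both $b_k\ge c\,k^{-1/\alpha}$ holds for all $k\ge 1$ and $c^{\alpha}4^{\alpha}(1+\alpha)=(1-\eta)^{\alpha}\frac{1+\alpha}{\alpha}>1$, which is possible precisely because $\frac{1+\alpha}{\alpha}>1$ strictly.
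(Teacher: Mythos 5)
Your conjugated-variable computation with $u_k=b_k^{-\alpha}$ is correct, and the clean telescoping $u_k\le u_0+\alpha 4^{\alpha}k=4^{\alpha}(1+\alpha k)\le 4^{\alpha}(1+\alpha)k$, giving $b_k\ge \bigl(4^{\alpha}(1+\alpha)\bigr)^{-1/\alpha}k^{-1/\alpha}$, is essentially the same estimate the paper runs as an induction on $b_k$ (the paper's inequality $(1+\tfrac1k)^{1/\alpha}-1\ge\tfrac{1}{\alpha(k+1)}$ is the mirror image of your $1-(1+t)^{-\alpha}\le\alpha t$). You also correctly identify the obstacle: this constant gives $d=1$ exactly. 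But your proposed resolution is wrong. The uniform-in-$k$ constant $\inf_{k\ge1}b_k k^{1/\alpha}$ is \emph{not} governed by the asymptotics; it is governed by $k=1$. Indeed $b_1<b_0=1/4$, whereas $(\alpha 4^{\alpha})^{-1/\alpha}=\tfrac{1}{4\alpha^{1/\alpha}}>\tfrac14$ for every $\alpha\in(0,1)$ (since $\alpha^{1/\alpha}<1$); so for any small $\eta$ the bound $b_1\ge c\cdot 1^{-1/\alpha}=(1-\eta)(\alpha4^{\alpha})^{-1/\alpha}$ is simply false. (For $\alpha=1/2$: $b_1\approx0.14$ while $(\alpha4^{\alpha})^{-1/\alpha}=1$.) Moreover the phrase ``$\eta>0$ small enough that $b_k\ge ck^{-1/\alpha}$ holds for all $k\ge1$'' has the logic reversed: shrinking $\eta$ \emph{enlarges} $c$ and strengthens the required bound; and ``enlarging the constant implicit in $\gtrsim$'' is not available here because decreasing $c$ is exactly what destroys $d>1$. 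The quantity $b_kk^{1/\alpha}$ approaches its supremum $(\alpha4^{\alpha})^{-1/\alpha}$ from below as $k\to\infty$, so no $c$ near that value can serve as a lower bound for all $k$.

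The correct repair stays entirely within your telescoping argument: one only needs $\sup_{k\ge1}u_k/k<4^{\alpha}(1+\alpha)$ \emph{strictly}, and this is available because the inequality $1-(1+t)^{-\alpha}<\alpha t$ is strict for $t>0$. Concretely, for $k\ge2$ you already have $u_k/k\le 4^{\alpha}(\tfrac1k+\alpha)\le 4^{\alpha}(\tfrac12+\alpha)<4^{\alpha}(1+\alpha)$, and for $k=1$ the strictness of the increment bound gives $u_1<4^{\alpha}(1+\alpha)$; hence $c:=\bigl(\sup_k u_k/k\bigr)^{-1/\alpha}$ satisfies $c>\bigl(4^{\alpha}(1+\alpha)\bigr)^{-1/\alpha}$ and therefore $d=c^{\alpha}4^{\alpha}(1+\alpha)>1$. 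This is precisely what the paper does in different clothing: it fixes $c=\tfrac{1}{4(1+\alpha)^{1/\alpha}}+\delta$ (your telescoping constant plus a margin), verifies the base case $k=1$ separately by showing $c+4^{\alpha}c^{1+\alpha}<1/4$, and closes the induction using the strict gap $\tfrac{1}{1+\alpha}<\tfrac{1}{\alpha}$ to absorb the $\delta$. So the missing step in your write-up is the quantified strictness at $k=1$ (and the $\tfrac1k\le\tfrac12$ slack for $k\ge2$), not the sharp asymptotics, which are irrelevant to the sublemma as stated.
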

\begin{proof}  
We proceed as in  Lemma 3.2 in \cite{LSV}, but proving the lower  bound. Let us choose $c=\frac{1}{4 (1+\alpha)^{\frac{1}{\alpha}}}+\delta$, where $\delta$ is a small positive constant whose value will be fixed later on. Note that with this value of $c$, the quantity $d>1$. We now prove the first assertion of the sublemma by induction. Suppose it is true for $k$; if it is not true for $k+1$ we should have
$$
b_k=b_{k+1}(1+4^{\alpha} b_{k+1}^{\alpha})\le c(k+1)^{-\frac{1}{\alpha}}(1+4^{\alpha}c^{\alpha}(k+1)^{-1})
$$
which implies that
$
k^{-\frac{1}{\alpha}}\le (k+1)^{-\frac{1}{\alpha}}(1+4^{\alpha}c^{\alpha}(k+1)^{-1})
$
or
$
\left(1+\frac1k\right)^{\frac{1}{\alpha}}-1\le \frac{4^{\alpha}c^{\alpha}}{k+1}.
$
But $\left(1+\frac1k\right)^{\frac{1}{\alpha}}-1\ge \frac{1}{\alpha}\frac{1}{k+1},$ which in conclusion gives us
$
c^{\alpha}\ge \frac{1}{\alpha \ 4^{\alpha}}.
$
With the given  choice $c=\frac{1}{4 (1+\alpha)^{\frac{1}{\alpha}}}+\delta,$ we see that for $\delta$ small enough the preceding lower bound  is false and so the induction is restored provided we prove the first step of it, namely
$
b_1\ge \frac{1}{4 (1+\alpha)^{\frac{1}{\alpha}}}+\delta.
$
Now $b_1+4^{\alpha}b_1^{1+\alpha}=1/4$; suppose $b_1$ will not verify the previous lower bound, then we should have
$$
\frac14 \le \frac{1}{4 (1+\alpha)^{\frac{1}{\alpha}}}+\delta+4^{\alpha}\left(\frac{1}{4 (1+\alpha)^{\frac{1}{\alpha}}}+\delta\right)^{1+\alpha}.
$$
It is easy to check that this can never be true.
\end{proof}
\begin{proof}(Of Lemma \ref{Le3})
As we anticipated above, we first need
(1). We have
\begin{equation*}
\begin{split}
|DT^{(n-k)}(T_i^{-1}T_1^{-(n-k-1)}x)|&=\Pi_{m=0}^{n-k-1}|DT(T^mT_{i}^{-1}T_{1}^{-(n-k-1)}x)\\
& \ge \Pi_{m=1}^{n-k-1}\inf_{y\in W_{k+m}}|DTy|\ge\Pi_{m=1}^{n-k-1}DT(b_{k+m+1}).
\end{split}
\end{equation*} 
The last estimate is true because the derivative of $T$ is increasing on $[0,a_0)$. In particular, since $DT_{1}(x)=1+(1+\alpha)4^{\alpha}x^{\alpha}$  and $b_k\ge c \frac{1}{k^{1/\alpha}}$, where $c$ is the constant given in the sublemma, we have
\begin{equation}\label{est1}
\begin{split}
|DT^{(n-k)}(T_{i}^{-1}T_{1}^{-(n-k-1)}x)|&\ge\Pi_{m=1}^{n-k-1}(1+\frac{d}{k+m+1})
=e^{\sum_{m=1}^{n-k-1}\log(1+\frac{d}{k+m+1})}.
\end{split}
\end{equation}
By the mean value theorem applied to the function $x\mapsto \log (1+x), x>0$ we immediately have 
\begin{equation}\label{est3}
\begin{split}
|DT^{(n-k)}(T_{i}^{-1}T_{1}^{-(n-k-1)}x)|&\ge e^{\frac{d}{1+\frac{d}{k+2}}\sum_{m=1}^{n-k-1}(\frac{1}{k+m+1})}
\ge e^{\frac{d}{1+\frac{d}{k+2}}\log\frac{n}{k+2}}=\left(\frac{n}{k+2}\right)^{\eta_k}.
\end{split}
\end{equation}
To prove (2) we sum over $n$ the estimate in (\ref{est3}) and we use the fact that $d>1$.
\end{proof}
\begin{proof} (Proof of Theorem \ref{main})
We have
\begin{equation*}
\begin{split}
||h_{\eps}-h_p||_1&\le\int_{\Delta}|h_{\eps}-h_p|dx+\sum_{k=1}^{\infty}\int_{W_{k}\setminus (W_{k,\eps}\cap W_k)}|h_{\eps}-h_p|dx\\
&+\sum_{k=1}^{\infty}\int_{W_{k,\eps}\cap W_k}|h_{\eps}-h_p|dx= {(I)}+ (II) +(III).
\end{split}
\end{equation*}
By Lemma \ref{Le1}
$$(I)=\int_{\Delta}|c_{\tau,\eps}\hat h_{\eps}-c_{\tau,p}\hat h_p|dx\le c_{\tau,p} \int_{\Delta}|\hat h_{\eps}-\hat h_p|dx +|c_{\tau,\eps}-c_{\tau,p}|\int_{\Delta}|\hat h_{\eps}|dx.$$
Therefore, by Proposition \ref{prop1} and Lemma \ref{Le2}, $(I)\to 0$ as $\eps\to 0$. To prove  that $(II)$ converges to zero we first obtain a bound on 
$\sup_{x\in W_{k}\setminus(W_{k,\eps}\cap W_k)}\left(|h_p(x)|+|h_{\eps}(x)|\right).$
Using (\ref{extension}), Proposition \ref{prop1} and Lemma \ref{Le3}, we have
\begin{equation*}
\begin{split}
\sup_{x\in W_{k}\setminus(W_{k,\eps}\cap W_k)}|h_p(x)|&\le\sup_{x\in W_{k}}c_{\tau,p}\sum_{n=k+2}^{\infty}\sum_{i=2}^3\frac{|\hat h_{p}(T_i^{-1}T_1^{-(n-k-2)}x)|}{|DT^{(n-k-1)}(T_i^{-1}T_1^{-(n-k-3)}x)|}\\
&\lesssim (\frac{B}{1-\beta})\ k.
\end{split}
\end{equation*}
A similar bound holds for $h_{\eps}$ by observing that the supremum should now be taken on an adjacent cylinder of $W_{k,\eps}$.
Consequently, since, as we already saw, $|b_k-b_{k-1}|\approx k^{-\frac{1}{\alpha}-1}, \ k\ge 1$, we obtain
\begin{equation*}
\begin{split}
(II)&\le 2(\frac{B}{1-\beta})\cdot\text{const}\sum_{k=1}^{\infty}k|b_{k}-b_{k-1}|\le \text{const}\sum_{k=1}^{\infty}k^{-\frac{1}{\alpha}}.
\end{split}
\end{equation*}
The uniform convergence of this series allows us to take the limit for $\eps \rightarrow 0$ inside and this will cancel the second contribution since $m(W_{k}\setminus (W_{k,\eps}\cap W_k))\rightarrow 0$ when $\eps\rightarrow 0$.
For the third one we have: 
\begin{equation*}
\begin{split}
&(III)=\sum_{k=1}^{\infty}\int_{W_{k,\eps}\cap W_k}|h_{\eps}(x)-h_p(x)|dx\\
&\le \sum_{k=1}^{\infty}\int_{W_{k,\eps}\cap W_k}| \sum_{n=k+1}^{\infty}\sum_{i=2}^3c_{\tau,p}\frac{\hat h_{p}(T_i^{-1}T_1^{-(n-k-1)}x)}{|DT^{(n-k)}(T_i^{-1}T_1^{-(n-k-1)}x)|}\\
&\hskip 4cm-c_{\tau,\eps}\frac{\hat h_{\eps}(T_{i,\eps}^{-1}T_{1,\eps}^{-(n-k-1)}x)}{|DT_{\eps}^{(n-k)}(T_{i,\eps}^{-1}T_{1,\eps}^{-(n-k-1)}x)|}|\\
&\le  \sum_{k=1}^{\infty}|c_{\tau,p}-c_{\tau,\eps}|\int_{W_{k,\eps}\cap W_k}\sum_{n=k+1}^{\infty}\sum_{i=2}^3\frac{|\hat h_{p}(T_i^{-1}T_1^{-(n-k-1)}x)|}{|DT^{(n-k)}(T_i^{-1}T_1^{-(n-k-1)}x)|}dx\\
&\hskip 1cm + \sum_{k=1}^{\infty}c_{\tau,\eps}\int_{W_{k,\eps}\cap W_k} \sum_{n=k+1}^{\infty}\sum_{i=2}^3|\frac{\hat h_{p}(T_i^{-1}T_1^{-(n-k-1)}x)}{|DT^{(n-k)}(T_i^{-1}T_1^{-(n-k-1)}x)|}\\
&\hskip 6cm -\frac{\hat h_{\eps}(T_{i,\eps}^{-1}T_{1,\eps}^{-(n-k-1)}x)}{|DT_{\eps}^{(n-k)}(T_{i,\eps}^{-1}T_{1,\eps}^{-(n-k-1)}x)|}|dx\\
&=A_1+A_2.
\end{split}
\end{equation*}
The quantity $A_1$ could be treated as the term (II) above: the integral inside the sum gives the summable contribution $k^{-\frac{1}{\alpha}}$ which will allow us to take afterwards the limit $|c_{\tau,p}-c_{\tau,\eps}|\rightarrow 0$ for $\eps\rightarrow 0$. The same argument shows that $A_2$ converges uniformly in $\eps$, but in order to take the limit inside the series, we have first of all to split $A_2$ into two supplementary terms:
\begin{equation*}
\begin{split}
A_2&\le\sum_{k=1}^{\infty}c_{\tau,\eps}\int_{W_{k,\eps}\cap W_k} \sum_{n=k+1}^{\infty}\sum_{i=2}^3|\frac{\hat h_{p}(T_i^{-1}T_1^{-(n-k-1)}x)}{|DT^{(n-k)}(T_i^{-1}T_1^{-(n-k-1)}x)|}\\
&\hskip 6cm -\frac{\hat h_{\eps}(T_{i,\eps}^{-1}T_{1,\eps}^{-(n-k-1)}x)}{|DT^{(n-k)}(T_i^{-1}T_1^{-(n-k-1)}x)|}|dx\\
&+\sum_{k=1}^{\infty}c_{\tau,\eps}\int_{W_{k,\eps}\cap W_k} \sum_{n=k+1}^{\infty}\sum_{i=2}^3|\frac{\hat h_{\eps}(T_{i,\eps}^{-1}T_{1,\eps}^{-(n-k-1)}x)}{|DT^{(n-k)}(T_i^{-1}T_1^{-(n-k-1)}x)|}\\
&\hskip 6cm -\frac{\hat h_{\eps}(T_{i,\eps}^{-1}T_{1,\eps}^{-(n-k-1)}x)}{|DT_{\eps}^{(n-k)}(T_{i,\eps}^{-1}T_{1,\eps}^{-(n-k-1)}x)|}|dx\\
&=A_2^*+A_2^{\dagger}.
\end{split}
\end{equation*}
To show that $A_2^*$ converges to zero as $\eps\to 0$ it will be sufficient to control the integral
$$\int_{W_{k,\eps}\cap W_k} \frac{1}{|DT^{(n-k)}(T_i^{-1}T_1^{-(n-k-1)}x)|}[\hat h_{p}(T_i^{-1}T_1^{-(n-k-1)}x)-\hat h_p(T_{i,\eps}^{-1}T_{1,\eps}^{-(n-k-1)}x))+
$$
$$
\hat h_p(T_{i,\eps}^{-1}T_{1,\eps}^{-(n-k-1)}x))-\hat h_{\eps}(T_{i,\eps}^{-1}T_{1,\eps}^{-(n-k-1)}x)]dx.
$$
We now make the change of variable $y_i=T_i^{-1}T_1^{-(n-k-1)}x\in Z_n$ and set $y'_i:=y'_i(y_i)=T_{i,\eps}^{-1}T_{1,\eps}^{-(n-k-1)}(T^{n-k}y_i)$. Then $y_i, y'_i \in Z_n\cup Z_{n\eps}$ and we rewrite the previous integral as
\begin{equation}\label{sum1}
\int_{Z_{n}}\left(|\hat h_p(y_i)-\hat h_p(y'_i)|+|\hat h_p(y'_i)-\hat h_{\eps}(y'_i)|\right)dy_i.
\end{equation}
We first have
$$\lim_{\eps\to 0}\int_{Z_{n}}|\hat h_p(y'_i)-\hat h_{\eps}(y'_i)|dy_i\le\lim_{\eps\to 0}||\hat h_p-\hat h_{\eps}||_1= 0.$$ 
We also have
 $\lim_{\eps\to 0}\int_{Z_{n}}|\hat h_p(y_i)-\hat h_{p}(y'_i)|dy_i=0,$
since by (1) of Lemma \ref{Le0} $\hat h_p$ is Lipschitz on $\Delta$ and $y'_i\to y_i$ as $\eps\to 0$.\\

To prove that $A_2^{\dagger}$ converges to $0$ as $\eps\to 0$,
it will be sufficient, after having factorized one of the inverse of the derivatives, to show that the ratio 
 $$\frac{DT_{\eps}^{(n-k)}(T_{i,\eps}^{-1}T_{1,\eps}^{-(n-k-1)}x)}{DT^{(n-k)}(T_i^{-1}T_1^{-(n-k-1)}x)}, \ x\in W_{k,\eps}\cap W_k
 $$
 goes to $1$. We begin to rewrite it as
 $$
 \Pi_{m=0}^{n-k-1}\frac{DT_{\eps}(T^my')}{DT(T^m y')}\frac{DT_{\eps}(T_{\eps}^my)}{DT_{\eps}(T^my')}
 $$
where we put $y:=T_{i,\eps}^{-1}T_{1,\eps}^{-(n-k-1)}x \in Z_{n,\eps}$ and $y':=T_i^{-1}T_1^{-(n-k-1)}x\in Z_n$ and we also recall that  $T_{\eps}^my\in W_{n-m,\eps}$ and  $T^my'\in W_{n-m}$. The first ratio $\frac{DT_{\eps}(T^my')}{DT(T^m y')}$ goes to one since for any $0\le x<b_0$: $\lim_{\eps\rightarrow 0}$ $DT_{\eps}(x)=DT(x)$. The second ratio can now be written in the form
$$
\left|\frac{DT_{\eps}(T_{\eps}^my)}{DT_{\eps}(T^my')}\right|=\exp\left[\left |\frac{D^2T_{\eps}}{DT_{\eps}}\right |_{\xi\in (T^m_{\eps}y,T^my')}\cdot |T^m_{\eps}y-T^my'|\right].
 $$
 Recall that the first and the second derivative are finite outside the origin; so we are left with proving that $|T^m_{\eps}y-T^my'|$ tends to $0$ when $\eps\rightarrow 0$. But $|T^m_{\eps}y-T^my'|= |T^m_{\eps}y-T^my|+|T^my-T^my'|$ and the first term goes to zero since $T^m_{\eps}$ converges uniformly to $T^m$ and the second term goes to zero by the continuity of $T^m$. This finishes the proof of part (1) of the theorem.
 \\
 To prove (2), we first use Proposition \ref{prop2} to obtain 
$$
\frac{\lambda_p}{1-\lambda_p}=\frac{\hat{\lambda}_p c_{\tau,r}}{(1-\hat{\lambda}_p)c_{\tau,l}}.
$$
Using (\ref{measure}) it follows immediately that $c_{\tau,r}=1$ and $c_{\tau,l}= \mu_l(\Delta_l)$, where $\Delta_l$ is the interval $(b_0,b)$. Therefore,
$$
\frac{\lambda_p}{1-\lambda_p}=\frac{\hat{\lambda}_p c_{\tau,r}}{(1-\hat{\lambda}_p)c_{\tau,l}}=\frac{1}{\mu_l(\Delta_l)}\lim_{\eps\rightarrow 0}\frac{\hat{\mu}_r(\hat{H}_{r,\eps})}{\hat{\mu}_l(\hat{H}_{l,\eps})}.$$
We now show that 
\begin{equation}\label{HF1}
\frac{1}{\mu_l(\Delta_l)}\lim_{\eps\rightarrow 0}\frac{\hat{\mu}_r(\hat{H}_{r,\eps})}{\hat{\mu}_l(\hat{H}_{l,\eps})}=\lim_{\eps\rightarrow 0}\frac{\mu_r(H_{r,\eps})}{\mu_l(H_{l,\eps})}
\end{equation}
which leads to the formula in part (2) of the theorem. We invoke formula (\ref{measure}) and the result which we obtained in part (1) of this theorem. We have
$H_{l,\eps}=I_l\cap T_{\eps}^{-1}I_r=(I_l\cap T_{\eps}^{-1}I_r)_l \cup (I_l\cap T_{\eps}^{-1}I_r)_r$, where $(I_l\cap T_{\eps}^{-1}I_r)_l=(I_l\cap T_{\eps}^{-1}I_r)\cap (0,b_0)$ and $(I_l \cap T_{\eps}^{-1}I_r)_r=(I_l \cap T_{\eps}^{-1}I_r)\cap(b_0,b)$.\\ 
Now, using (\ref{measure}) we obtain 
$$
\mu_{\eps}((I_l\cap T_{\eps}^{-1}I_r)_l)=c_{\tau,\eps} \sum_{n=2}^{\infty}\hat{\mu}_{\eps}(T_{\eps}^{-(n-1)}(I_l\cap T_{\eps}^{-1}I_r)_l\cap Z_{n,\eps})=
$$
$$
c_{\tau,\eps} \sum_{n=2}^{\infty}\hat{\mu}_{\eps}(\hat{T}^{-1}_{n,\eps}I_r\cap \Delta_l)=c_{\tau,\eps}\left[\hat{\mu}_{\eps}(\hat{T}^{-1}_{\eps}I_r\cap \Delta_l)-\hat{\mu}_{\eps}(\hat{T}^{-1}_{1,\eps}I_r\cap \Delta_l)\right]
$$
where we define
$ \hat{T}^{-1}_{n,\eps}:=\left({T_{\eps}^n}_{|_{Z_{n,\eps}}}  \right)^{-1}$. On the other hand
$$
\mu_{\eps}((I_l\cap T_{\eps}^{-1}I_r)_r)=c_{\tau,\eps}\hat{\mu}_{\eps}(\hat{T}^{-1}_{1,\eps}I_r\cap \Delta_l)
$$
since $(I_l\cap T_{\eps}^{-1}I_r)_r$ is inside the domain of induction. In conclusion we have proved that
$$
\mu_{\eps}(H_{l,\eps})=\mu_{\eps}(I_l\cap T_{\eps}^{-1}I_r)=c_{\tau,\eps} \hat{\mu}_{\eps}(\hat{H}_{l,\eps}).
$$
In a much easier way we immediately have
$$
\mu_{\eps}(H_{r,\eps})=\mu_{\eps}(I_r\cap T_{\eps}^{-1}I_l)=c_{\tau,\eps}\hat{\mu}_{\eps}(\hat{T}_{\eps}^{-1}\Delta_l\cap I_r)=c_{\tau,\eps}\hat{\mu}_{\eps}(\hat{H}_{r,\eps}).
$$
Therefore we have
\begin{equation}\label{HF2}
\frac{\mu_{\eps}(H_{r,\eps})}{\mu_{\eps}(H_{l,\eps})}=\frac{\hat{\mu}_{\eps}(\hat{H}_{r,\eps})}{\hat{\mu}_{\eps}(\hat{H}_{l,\eps})}.
\end{equation}
By (2) of Proposition \ref{prop1} and (1) of Theorem \ref{main}, we have:
\begin{itemize}
\item $\hat{\mu}_{\eps}(A)\rightarrow (1-\hat{\lambda}_p)\hat{\mu}_{r}(A)$, whenever $A$ is a mesurable set in $\Delta_r$.
\item $\mu_r(A)\leftarrow \frac{\mu_{\eps}(A)}{1-\lambda_p} $, whenever $A$ is a mesurable set in $I_r$.
\item $\hat{\mu}_{\eps}(A)\rightarrow \hat{\lambda}_p\hat{\mu}_{l}(A)$,  whenever $A$ is a mesurable set in $\Delta_l$.
\item $\mu_l(A)\leftarrow \frac{\mu_{\eps}(A)}{\lambda_p}$, whenever $A$ is a mesurable set in $I_l$.
\end{itemize}
Of course the same is true if $A$ depends on $\eps$ since, take for instance $A_{\eps}\subset I_r$,
$$
|\hat{\mu}_{\eps}(A_{\eps})- (1-\hat{\lambda}_p)\hat{\mu}_{r}(A_{\eps})|\le ||\hat{h}_{\eps}-\hat{h}_p||_1\rightarrow 0. 
$$
Putting together all that and using (\ref{HF2}) we get (\ref{HF1}):
\begin{equation*}
\begin{split}
\lim_{\eps\rightarrow 0}\frac{\mu_r(H_{r,\eps})}{\mu_l(H_{l,\eps})}&= \lim_{\eps\rightarrow 0}\frac{\lambda_p}{1-\lambda_p}\frac{\mu_\eps(H_{r,\eps})}{\mu_\eps(H_{l,\eps})}= \lim_{\eps\rightarrow 0}\frac{\lambda_p}{1-\lambda_p}\frac{\hat\mu_\eps(\hat H_{r,\eps})}{\hat\mu_\eps(\hat H_{l,\eps})}\\
&=\lim_{\eps\rightarrow 0}\frac{\hat\lambda_p}{(1-\hat\lambda_p)\mu_{l}(\Delta_l)}\frac{\hat\mu_\eps(\hat H_{r,\eps})}{\hat\mu_\eps(\hat H_{l,\eps})}=\lim_{\eps\rightarrow 0}\frac{1}{\mu_{l}(\Delta_l)}\frac{\hat\mu_r(\hat H_{r,\eps})}{\hat\mu_l(\hat H_{l,\eps})}.
\end{split}
\end{equation*}
 \end{proof} 
 {\bf Acknowledgement} We would like to thank anonymous referees for their comments. Their suggestions have improved the presentation of the paper.
 
\section{Appendix}
In the appendix we provide a method which can be used to determine the number of ergodic a.c.i.ms for maps similar to $T_{\eps}$. In particular we will show that for any $\eps>0$, the map $T_{\eps}$ defined in (\ref{map}) has exactly one a.c.i.m.
Let $\mathcal C_{(T_\eps)}:=\{I_i\}_{i=1}^6$ be the partition on which $T_{\eps}$ is piecewise monotonic. We introduce a directed graph associated with the perturbed map $T_\eps$, $\eps>0$, and we denote it by $G(T_{\eps})$\footnote{A similar graph can be found in \cite{BG} which is used to get an upper bound on the number on ergodic of a.c.i.ms when the modulus of the derivative of the map is greater than 2. Since in our case $\inf_{x}|T_{\eps}|=1$, we cannot use the results found in \cite{BG}.}.\\

\noindent $\bullet$ There is an arrow from $I_i\to I_j$ if and only if there exists a $k\ge1$ such that $T_{\eps}^k(I_i)\supseteq I_j$, $i,j\in\{1,\dots,6\}$.\\
$\bullet$ $I_j$ is said to be \textit{accessible} from $I_i$ if there exists aa \textit{arrow} in $G(T_{\eps})$ from $I_i$ to $I_j$.\\
$\bullet$ The \textit{accessible set} from $I_i$, denoted by $[I_i]$, consists of all intervals $I_j$ which are \textit{accessible} from $I_i$.\\

\begin{lemma}\label{new}
Let $\mu$ be a $T_{\eps}$ ergodic a.c.i.m\footnote{We know that there is at least one such measure since the corresponding induced map has an a.c.i.m.}. Then the support of $\mu$ contains $[I_i]$ for some $i=1,\dots,6$. 
\end{lemma}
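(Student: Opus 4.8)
The plan is to show that the interior of some element of the partition $\mathcal C_{(T_\eps)}$ lies in the essential support of $h:=d\mu/dm$, and then to propagate this inclusion along the arrows of $G(T_\eps)$. First I would pass to a lower semicontinuous version of $h$. On $\Delta=[1/4,1]$ one has $h=c_{\tau,\eps}\hat h_\eps$ with $\hat h_\eps\in BV(\Delta)$ by Proposition~\ref{prop1} (applied to the induced system, cf.\ Remark~\ref{Re1}), and on each $W_{k,\eps}$ the representation in Lemma~\ref{Le1} exhibits $h$ as a locally BV function; thus $h$ is continuous off a countable set and has a single integrable singularity at the neutral fixed point $0$, so it admits an lsc version for which $A:=\{x:h(x)>0\}$ is open. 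From $P_\eps h=h$ one reads off forward invariance of $A$: if $h(y)>0$ and $x=T_\eps(y)$, then $P_\eps h(x)\ge h(y)/|T_\eps'(y)|>0$, so $T_\eps(A)\subseteq A$ and hence $T_\eps^n(A)\subseteq A$ for all $n\ge1$; and since $A$ is open and of full $\mu$-measure, $\operatorname{supp}\mu=\overline A$.

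The dynamical core is the claim that for every nonempty open interval $J$ there exist $n\ge0$, a subinterval $J'\subseteq J$ and an index $i$ with $T_\eps^n(J')\supseteq\overline{I_i}$. This is the standard ``an interval grows until it covers a branch domain'' phenomenon for this class of maps: every branch of $T_\eps$ has $|T_\eps'|\ge1$, with $|T_\eps'|\ge\lambda>1$ off any fixed neighbourhood of $0$, while $T_{1,\eps}(x)>x$ on $(0,1/4)$ forces the forward orbit of any interval to leave every neighbourhood of $0$ in finite time; once a suitably chosen growing subinterval is bounded away from $0$ it is genuinely expanded, and after finitely many cuts at the partition points $1/4,3/8,1/2,s_r,\dots$ one of the surviving pieces is carried by a single branch of some $T_\eps^m$ onto that branch's image, which one checks directly from (\ref{map}) to contain a full closed $\overline{I_i}$. (This is precisely the covering/graph argument underlying the construction in \cite{BG}.) Applying this with $J$ an open interval contained in $A$, and using $T_\eps^n(A)\subseteq A$, we obtain $\overline{I_{i_0}}\subseteq A$ for some $i_0\in\{1,\dots,6\}$.

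It remains to propagate. Note first that the arrow relation in $G(T_\eps)$ is transitive, since $T_\eps^k(I_i)\supseteq I_j$ and $T_\eps^{k'}(I_j)\supseteq I_l$ give $T_\eps^{k+k'}(I_i)\supseteq I_l$; hence $I_l\in[I_{i_0}]$ means $T_\eps^m(I_{i_0})\supseteq I_l$ for some $m\ge1$, and since $I_{i_0}\subseteq A$ this yields $I_l\subseteq T_\eps^m(I_{i_0})\subseteq T_\eps^m(A)\subseteq A$. Therefore $[I_{i_0}]=\bigcup_{I_l\in[I_{i_0}]}I_l\subseteq\overline A=\operatorname{supp}\mu$, which is the assertion. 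The one genuinely delicate point is the dynamical core of the second paragraph: because $T_\eps$ is only non-uniformly expanding — the derivative degenerating to $1$ at the neutral fixed point — one must argue carefully that an arbitrary interval escapes the neighbourhood of $0$ and survives the finitely many cuts at the partition points until a single branch of some iterate maps a piece of it onto a set containing a whole $\overline{I_i}$; one must also verify from (\ref{map}) that each branch image indeed contains some $\overline{I_i}$.
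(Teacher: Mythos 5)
Your overall architecture matches the paper's: locate a full partition element $I_{i_0}$ inside the support of $\mu$, then propagate along the arrows of $G(T_{\eps})$ using forward invariance. The measure-theoretic bookkeeping (the lower semicontinuous version of $h$, the inclusion $T_{\eps}(A)\subseteq A$ read off from $P_{\eps}h=h$, the identification of $\overline A$ with the support) and the final propagation step are correct, and are if anything spelled out more carefully than in the paper, which simply says "by invariance".

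The problem is the step you yourself flag as delicate at the end, and it is not a detail that can be deferred: it is the entire content of the lemma. The mechanism you invoke for showing that some iterate of an arbitrary interval covers a full $\overline{I_i}$ --- iterate, cut at the partition points, keep a surviving piece, and conclude that expansion eventually wins, "precisely the covering/graph argument underlying the construction in \cite{BG}" --- requires a uniform bound $|T_{\eps}'|\ge\lambda>2$, so that the larger piece remaining after each cut still grows. That bound fails here: $\inf_x|T_{\eps}'(x)|=1$, and on the first branch one has $T_{1,\eps}'(x)=1+(1+\alpha)4^{\alpha}(1+4\eps)x^{\alpha}$, which is barely above $1$ over most of $(0,1/4)$ and only reaches $2+\alpha+O(\eps)$ at the right endpoint; moreover pieces are re-injected arbitrarily close to the neutral fixed point by $T_{2,\eps}$ (which sends $3/8$ to $0$), so "once bounded away from $0$, always expanded" is not available. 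The paper's appendix footnote makes exactly this point: the counting results of \cite{BG} cannot be used because $\inf_x|T_{\eps}'|=1$. The paper's proof therefore replaces the generic covering argument by a two-stage, map-specific one: (i) from $m(T_{\eps}(J))>m(J)$ and escape from the neutral fixed point, some iterate of $J$ contains a partition point in its interior; (ii) a case-by-case check over the six partition points shows that the sub-piece of the image on the appropriate side of that point remains in $[1/4,1]$, where $|T_{\eps}'|>2$, so the standard doubling forces a second partition point into a further iterate, and an interval containing two partition points contains a full $I_i$. You need to supply stage (ii), or an equivalent argument adapted to the specific branches of (\ref{map}); as written, the dynamical core of the lemma is asserted rather than proved.
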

\begin{proof}
We will first show that for any interval $J\subset I$, there exists an $n\ge 1$ such that $T_{\eps}^n(J)$ contains two partition points. Let $J\subset I_i$ for some $i$. Since $m(T_{\eps}(J))>m(J)$, there exists a $j\ge 1$ such that $T_{\eps}^j(J)$ contains a partition point in its interior. We consider all possible cases.
\begin{enumerate}
\item If $T_{\eps}^j(J)$ contains the partition point $0$, then obviously there exists a $k\ge 1$ such that $T_{\eps}^{j+k}(J)$ contains $[0,1/2]$.
\item The case of the partition point $3/8$ is the same as that of $0$. 
\item If $T_{\eps}^j(J)$ contains the partition point $1/4$ in its interior; i.e $T_{\eps}^j(J)\supset (p_1,p_2)$ with $1/4\in(p_1,p_2)$  . Then we observe that $T_{\eps}^k([1/4,p_2))\subseteq [1/4, 1]$ for all $k\ge 1$, and $\inf_{x\in[1/4,1]}|T_{\eps}'|>2$. Thus for some $k\ge 1$, $T_{\eps}^k([1/4,p_2))$ must contain two partition points (otherwise the length of iterates of the image will go to $\infty$ since the modulus of the derivative is bigger than 2). Thus, $T_{\eps}^{j+k}(J)$ contains two partition points.
\item The cases of the partition points $1/2, 5/8, 13/16, 1$ are similar to that of $1/4$.
\end{enumerate} 
Let $C$ denote the support of $\mu$. Since $C$ contains an interval $J$, $T^n(J)$, $n\ge 1$ contains two partition points, and $C$ is an invariant set, $C$ must contain (mod 0) an $I_i$. Consequently (by invariance) C must contain (mod 0) $[I_i]$.
\end{proof}
\begin{lemma}
For each $\eps>0$, $T_{\eps}$ has a unique ergodic a.c.i.m.
\end{lemma}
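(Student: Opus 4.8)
The plan is to combine Lemma \ref{new} with the standard fact that two distinct ergodic a.c.i.m.'s are mutually singular, hence have essentially disjoint supports. By Lemma \ref{new}, the support of \emph{every} ergodic a.c.i.m. of $T_{\eps}$ contains the accessible set $[I_i]$ for some $i\in\{1,\dots,6\}$; existence of at least one a.c.i.m. is already known, since $\hat T_{\eps}$ has one and it pulls back via Lemma \ref{Le1}. Thus it suffices to prove that $G(T_{\eps})$ has a single communicating class, i.e. $[I_i]=\{I_1,\dots,I_6\}$ for \emph{every} $i$: then every ergodic a.c.i.m. has full support $[0,1]$ mod $0$, two of them cannot be mutually singular, and there is exactly one. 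Since $[I_i]=[I_j]$ whenever $I_j\in[I_i]$, it is enough to trace the arrows.

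\textbf{The left block and the left hole.} From (\ref{map}) one reads off $T_{\eps}(I_1)=[0,\tfrac12+\eps)$, $\overline{T_{\eps}(I_2)}=[0,\tfrac12+\eps]$ and $T_{\eps}(I_3)=[0,\tfrac12)$; for $\eps$ small each contains $I_1\cup I_2\cup I_3$ up to endpoints, so $\{I_1,I_2,I_3\}\subseteq[I_j]$ for $j=1,2,3$. Moreover $T_{\eps}(I_1)$ contains $[\tfrac12,\tfrac12+\eps)\subset I_4$; since $\tfrac12$ is a repelling fixed point of $T_{4,\eps}$ ($T_{4,\eps}(\tfrac12)=\tfrac12$ by the continuous matching with $T_{3,\eps}$, and $|T_{4,\eps}'|>1$ there), the iterates of $[\tfrac12,\tfrac12+\eps)$ grow, and the hypothesis that $T_{4,\eps}$ has finitely many \emph{long} branches yields that some iterate covers all of $[\tfrac12,1]=I_4\cup I_5\cup I_6$ (unless a piece first escapes below $\tfrac12$ through the spike, which only helps, see below). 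Hence $\{I_4,I_5,I_6\}\subseteq[I_1]$, and with the previous point $[I_1]=[I_2]=[I_3]=\{I_1,\dots,I_6\}$.

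\textbf{The right hole and conclusion.} Within $[\tfrac12,1]$ the long-branch property again makes $I_4,I_5,I_6$ mutually accessible, so it remains to produce one arrow from $[\tfrac12,1]$ back to $\{I_1,I_2,I_3\}$. Let $I_{j_0}$ ($j_0\in\{4,5,6\}$) contain the relative minimum $s_r$. Because the spike has height exactly $\eps$, i.e. $T_{4,\eps}(s_r)=b-\eps=\tfrac12-\eps$, there is a nondegenerate subinterval $J^{*}\ni s_r$ of $I_{j_0}$ with $T_{\eps}(J^{*})\subset(\tfrac12-\eps,\tfrac12]\subset I_3$ (here $\eps<1/8$). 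Iterating $T_{\eps}$ from this interval inside $[0,\tfrac12]$ and using exactly the expansion mechanism of the proof of Lemma \ref{new} — the only extra bookkeeping being that $T_{1,\eps},T_{2,\eps}$ overshoot $\tfrac12$ by at most $\eps$, so the interval may temporarily straddle $\tfrac12$ and then re-enters $[0,\tfrac12]$ under $T_{3,\eps}$ — one obtains an iterate covering some $I_i$ with $i\in\{1,2,3\}$. Thus $I_i\in[I_{j_0}]$, so $[I_{j_0}]\supseteq[I_i]=\{I_1,\dots,I_6\}$ by the previous paragraph, and since $I_{j_0}$ is accessible from $I_4,I_5,I_6$ we get $[I_4]=[I_5]=[I_6]=\{I_1,\dots,I_6\}$ as well. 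Hence $[I_i]=\{I_1,\dots,I_6\}$ for all $i$, this set has full Lebesgue measure, every ergodic a.c.i.m. has full support, and uniqueness follows from the disjointness of supports.

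\textbf{Main obstacle.} The delicate part is the two escape steps. Since $T_{4,\eps}$ is specified only qualitatively (piecewise expanding, finitely many long branches, one spike of height $\eps$ at $s_r$), one must read off from ``long branches'' that $T_{4,\eps}$ is topologically transitive on $[\tfrac12,1]$ in a way robust under the $O(\eps)$ leakage through the spike, and one must track a single interval as it alternates between the two blocks because of the $O(\eps)$ overshoot past $\tfrac12$; the neutral fixed point at $0$ merely slows the growth of intervals (the derivative is still $>1$ off any neighbourhood of $0$, and $T_{1,\eps}(x)>x$ for $x>0$) and is not a genuine obstruction. A graph-free alternative is also available: any $T_{\eps}$-a.c.i.m. $\nu$ charges $\Delta$ and returns to it $\nu$-a.e. (the only orbit staying forever in $[0,\tfrac14)$ is the fixed point $0$), so $\nu|_{\Delta}$ normalised is a $\hat T_{\eps}$-a.c.i.m., hence equals $\hat h_{\eps}\,dm$ by condition (B3) of Lemma \ref{Le0}; reconstructing $\nu$ from $\nu|_{\Delta}$ via (\ref{measure}) then forces $\nu=\mu_{\eps}$.
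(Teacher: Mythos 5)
Your proof takes essentially the same route as the paper: combine Lemma \ref{new} with the mutual singularity of distinct ergodic a.c.i.m.'s and check that $[I_i]=\{I_1,\dots,I_6\}$ for every $i$. The paper merely states this last fact as an observation, whereas you trace the arrows explicitly (including the two $O(\eps)$ leakage steps through the spikes), so your write-up is, if anything, more detailed than the published one.
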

\begin{proof}
Observe that for each $i=1,\dots 6$,
$$[I_i]= \{I_1,I_2, I_3, I_4, I_5, I_6\}.$$
Thus by Lemma \ref{new}, and the fact that ergodic a.c.i.ms must have disjoint supports, $T_{\eps}$ has a unique a.c.i.m. 
\end{proof}  
\bibliographystyle{amsplain}

\begin{thebibliography}{10}
\bibitem{AB} Afraimovich, V. and Bunimovich, L.\textit{ Which hole is leaking the most: a topological approach to study open systems}. Nonlinearity 23 (2010), no. 3, 643--656.
\bibitem{A} Alves, J., \textit{Strong statistical stability of non-uniformly expanding maps}. Nonlinearity, 17, (2004), no. 4, 1193--1215.
\bibitem{AV} Alves, J. and Viana, M. \textit{Statistical stability for robust classes of maps with non-uniform expansion}. Ergodic Theory and Dynam. Systems,  22 (2002), no. 1, 1--32.
\bibitem {Ba} Baladi, V.,\textit{ Positive transfer operators and decay of correlations}, Advanced Series in Nonlinear Dynamics, 16. World Sci. Publ., NJ, 2000.
\bibitem {BG} Boyarsky, A. and G\'ora, P.,\textit{ Laws of Chaos}, Brikh\"auser, Boston, 1997.
\bibitem{BDM} Bruin, H., Demers, M., Melbourne, I., \textit{Existence and convergence properties of physical measures for certain dynamical systems with holes}. Ergodic Theory Dynam. Systems, 30 (2010), no. 3, 687--728.
\bibitem{BKT} Bundfuss, S., Krueger, T. and Troubetzkoy, S., \textit{Topological and symbolic dynamics for hyperbolic systems with holes}. To appear in Ergodic Theory and Dynam. Systems.
\bibitem{DWY} Demers, M., Wright, P. and Young, L-S. \textit{Escape rates and physically relevant measures for billiards with small holes}. Comm. Math. Phys. 294 (2010), no. 2, 353--388. 
\bibitem {DY} Demers, M. and Young, L-S.,\textit{ Escape rates and conditionally invariant measures}, Nonlinearity 19 (2006), 377-397.
\bibitem{Det}  Dettmann, C., \textit{Recent advances in open billiards with some open problems}, in Frontiers in the study of chaotic dynamical systems with open problems, Ed. Z. Elhadj and J. C. Sprott, World Sci Publ., 2010.
\bibitem{DW} Dolgopyat, D. and Wright, P., \textit{The Diffusion Coefficient for Piecewise Expanding Maps of the Interval with Metastable States.} To appear in Stochastics and Dynamics. 
\bibitem{FP} Ferguson, A. and Pollicott, M.\textit{ Escape rates for Gibbs measures}. To appear in Ergodic Theory Dynam. Systems. 
\bibitem{FMS} Froyland, G., Murray, R. and Stancevic, O.,\textit{ Spectral degeneracy and escape dynamics for intermittent maps with a hole.} Nonlinearity, 24, (2011), no. 9, 2435--2463. 
\bibitem{GHW} Gonz\'alez Tokman, C., Hunt, B. R. and Wright, P., \textit{Approximating invariant densities of metastable systems}.To appear in Ergodic Theory Dynam. Systems..
\bibitem {KL} Keller, G. and Liverani, C.,\textit{ Rare events, escape rates, quasistationarity: some exact formulae},  J. Stat. Phys., 135 (2009), no. 3, 519-534.
\bibitem{LSV} Liverani, C.,  Saussol, B. and Vaienti, S., \textit{A probabilistic approach to intermittency}. Ergodic Theory Dynam. Systems, 19, (1999), no. 3, 671--685.
\bibitem{Pi}  Pianigiani, G.,\textit{ First return map and invariant measures}, Israel J. Math., 35, (1980), 32--48. 
\end{thebibliography}

\end{document}